\newcommand\vv{\mathbf v}
\newcommand\x{\mathbf x}
\newcommand\y{\mathbf y}
\newcommand\ww{\mathbf w}
\newcommand\comment[1]{}
\def\mylist{{0}{90}{180}{270}}
\newtheorem{definition}{Definition}
\newtheorem{corollary}{Corollary}
\newtheorem{lemma}{Lemma}
\newtheorem{theorem}{Theorem}
\newcommand\R{{\texttt{R}}}
\renewcommand\L{{\texttt{L}}}
\renewcommand\r{{\texttt{r}}}
\renewcommand\ll{{\texttt{l}}}
\renewcommand\S{{\texttt{S}}}
\newcommand\s{{\texttt{s}}}
\newcommand\A{{\texttt{A}}}
\newcommand\B{{\texttt{B}}}
\renewcommand\v{{\texttt{v}}}
\newcommand\w{{\texttt{w}}}
            \newcommand\tileAB[5]{
  \begin{scope}[shift = {(#1+#3/2,#2+#3/2)}]
    \begin{scope}[rotate = {#4}]
    \draw[gray,line width = 2] (-#3/2,0) arc (-90:0:#3/2);
\draw[gray,line width = 2] (0,-#3/2) arc (180:90:#3/2);    
    \end{scope}
      \end{scope}
            }
            \newcommand\tileABb[5]{
  \begin{scope}[shift = {(#1+#3/2,#2+#3/2)}]
    \begin{scope}[rotate = {#4}]
    \draw[gray,line width = 2] (-#3/2,0) arc (-90:0:#3/2);
    \draw[gray,line width = 2] (0,-#3/2) arc (180:90:#3/2);
    \draw[orange, line width = 3] (1/2,#3/2)--(-1/2,-#3/2);
    \end{scope}
      \end{scope}
  }
\newcommand\twistBB[1]{
         \clip (0,0) rectangle (1,1);
          \begin{scope}[shift = {(0.5,0.5)}]
          \begin{scope}[scale = {0.86},rotate={-10}]
          \begin{scope}[shift = {(-0.5,-0.5)}]
            \tileAB{1}{0}{1}{0}{gray!50!white};
            \tileAB{0}{1}{1}{90}{gray!50!white};
            \tileAB{0}{-1}{1}{90}{gray!50!white};
            \tileAB{-1}{0}{1}{0}{gray!50!white};            
         \end{scope}
          \end{scope}
                  \end{scope}
}
\newcommand\twistBBb[1]{
         \clip (0,0) rectangle (1,1);
          \begin{scope}[shift = {(0.5,0.5)}]
          \begin{scope}[scale = {0.86},rotate={-10}]
          \begin{scope}[shift = {(-0.5,-0.5)}]
            \tileABb{1}{0}{1}{0}{gray!50!white};
            \tileABb{0}{1}{1}{90}{gray!50!white};
            \tileABb{0}{-1}{1}{90}{gray!50!white};
            \tileABb{-1}{0}{1}{0}{gray!50!white};            
         \end{scope}
          \end{scope}
                  \end{scope}
          }
\newcommand\twistBC[1]{
         \clip (0,0) rectangle (1,1);
          \begin{scope}[shift = {(0.5,0.5)}]
          \begin{scope}[scale = {0.71},rotate={-45}]
          \begin{scope}[shift = {(-0.5,-0.5)}]
            \tileAB{1}{0}{1}{0}{gray!50!white};
            \tileAB{0}{1}{1}{90}{gray!50!white};
            \tileAB{0}{-1}{1}{90}{gray!50!white};
            \tileAB{-1}{0}{1}{0}{gray!50!white};            
         \end{scope}
          \end{scope}
                  \end{scope}
}
\newcommand\twistBCb[1]{
         \clip (0,0) rectangle (1,1);
          \begin{scope}[shift = {(0.5,0.5)}]
          \begin{scope}[scale = {0.71},rotate={-45}]
          \begin{scope}[shift = {(-0.5,-0.5)}]
            \tileABb{1}{0}{1}{0}{gray!50!white};
            \tileABb{0}{1}{1}{90}{gray!50!white};
            \tileABb{0}{-1}{1}{90}{gray!50!white};
            \tileABb{-1}{0}{1}{0}{gray!50!white};            
         \end{scope}
          \end{scope}
                  \end{scope}
          }
\begin{document}

\title{On the Boundary of the Harter-Heighway dragon curve}

  \author{{
H. A. Verrill
\date{\today}
H.A.Verrill@warwick.ac.uk
    } 
    }

    \maketitle

    \begin{abstract}
      In this article we apply an L-system to prove a recurrence
      formula
      for the length of the boundary of iterands $\mathcal C_n$
      of the well known 
      Harter-Heighway dragon curve,
      a space filling curve with
      fractal boundary.
      This leads to finding formulas for related sequences of certain
      binary strings and ternary matrices.
      This proves some long standing conjectures for the
      recurrence relation for the number of terms in the boundary of
      the dragon curve, first stated in unpublished work Daykin and Tucker
      from 1975      \cite{DaykinandTucker}.
      \end{abstract}

    \section{Introduction}

    This article proves some results about two sequences, which count
    the number of components on the left and right sides of
    the iterands of the Harter-Heighway dragon.
    The sequences appear in \cite{oeis} as sequences A227036
    and A203175,
    but formulas given there
    are conjectural.  We provide proofs of the formulas, which give
    the sequences in terms of generating function,
    appearing in   \cite{Plouffe}.
    
    The Harter-Heighway dragon curve,
 also known as the Heighway dragon, or the dragon curve
 was discovered in 1967 by John Heighway and William Harter \cite{Tab},
 \cite[1.5]{Edgar}.
    It is a fascinating curve, since it is plane-filling and has a fractal
    boundary.  There are many variations on this curve.
    See e.g., \cite{Dekking}, \cite{AH2}.    
    This curve,  which  we refer to as
    $\mathcal C_\infty$,
 is given as the limit of
    a sequence of curves, $\mathcal C_n$ for
    non-negative integers $n$.
    The curve $\mathcal C_n$ is formed of
    $2^n$ equal length line segments, which we also will refer to as the
    {\it edges}, of $\mathcal C_n$,
    with a $90^\circ$ angle between each edge.
    The curves can
    be described in various ways, as follows.

    \subsection{Construction (A): paper folding}
    The Harter-Heighway curve
    was first obtained
    by repeatedly folding a strip of paper in half $n$ times,
    and then opening out all
    the folds to have angle $90^\circ$.  The first few cases
    are shown in Figure~\ref{fig:Heighwaycurve}.

    \begin{figure}[H]
\scalebox{1.09}{      
  \begin{tikzpicture}[scale={0.8}]

   \begin{scope}[shift={(-3.5,1)},scale={2}]
     \begin{scope}[rotate={-90}]
\draw [->,thick,rounded corners=0mm, line width = 0.5mm,xshift={0mm},yshift={0mm},
  turtle/distance=1cm,turtle={home,forward
}];   \end{scope}
\node at (0.5,0.125){\texttt{A}};
   \end{scope}

   \begin{scope}[shift={(-1.25,1)},scale={2}]
     \begin{scope}[rotate={-90}]
\draw [gray!30!white,rounded corners=0mm, line width = 0.5mm,xshift={0mm},yshift={0mm},
  turtle/distance=1cm,turtle={home,forward
}];
      \end{scope}
\node at (0.17,0.32){\small \texttt{A}};
\node at (0.83,0.32){\small \texttt{B}};
\node at (0.5,0.59){\texttt{+}};
      \end{scope}

   \begin{scope}[shift={(-1.25,1)},scale={1.414},rotate={-45}]
\draw [->,thick,rounded corners=0mm, line width = 0.5mm,xshift={0mm},yshift={0mm},
  turtle/distance=1cm,turtle={home,forward, right, forward
}];
      \end{scope}

    \begin{scope}[shift={(1.35,1)},scale={1.414}]
     \begin{scope}[rotate={-45}]
\draw [gray!30!white,rounded corners=0mm, line width = 0.5mm,xshift={0mm},yshift={0mm},
  turtle/distance=1cm,turtle={home,forward, right, forward
}];
     \end{scope}
     \node at (-0.12,0.4){\tiny\texttt{A}};
\node at (0.3,0.85){\tiny\texttt{B}};
\node at (-0.1,0.8){\small\texttt{+}};
\node at (.8,0.8){\small\texttt{+}};
\node at (.85,0.4){\tiny\texttt{A}};
\node at (.85,0.1){\small\texttt{-}};
\node at (1.1,0.11){\tiny\texttt{B}};
\end{scope}

    \begin{scope}[shift={(1.35,1)},scale={1}]
\draw [->,thick,rounded corners=0mm, line width = 0.5mm,xshift={0mm},yshift={0mm},
  turtle/distance=1cm,turtle={home,right=0,
forward, right, forward, right, forward, left, forward
      }];
    \end{scope}

    \begin{scope}[shift={(4,1)},scale={1}]
\draw [gray!30!white,rounded corners=0mm, line width = 0.5mm,xshift={0mm},yshift={0mm},
  turtle/distance=1cm,turtle={home,right=0,
forward, right, forward, right, forward, left, forward
      }];
      \end{scope}

    \begin{scope}[shift={(4,1)},scale={0.707},rotate={45}]
\draw [->,thick,rounded corners=0mm, line width = 0.5mm,xshift={0mm},yshift={0mm},
  turtle/distance=1cm,turtle={home,right=0,
forward, right, forward, right, forward, left, forward, right, forward, right, forward, left, forward, left, forward
      }];
    \end{scope}

    \begin{scope}[shift={(7,1)},scale={0.707},rotate={45}]
\draw [gray!30!white,thick,rounded corners=0mm, line width = 0.5mm,xshift={0mm},yshift={0mm},
  turtle/distance=1cm,turtle={home,right=0,
forward, right, forward, right, forward, left, forward, right, forward, right, forward, left, forward, left, forward
      }];
    \end{scope}

       \begin{scope}[shift={(7,1)},scale={0.5},rotate={90}]
\draw [->,thick,rounded corners=0mm, line width = 0.5mm,xshift={0mm},yshift={0mm},
  turtle/distance=1cm,turtle={home,right=0,
forward, right, forward, right, forward, left, forward, right, forward, right, forward, left, forward, left, forward, right, forward, right, forward, right, forward, left, forward, left, forward, right, forward, left, forward, left, forward
      }];
       \end{scope}

\begin{scope}[shift={(10,1)},scale={0.5},rotate={90}]
\draw [gray!30!white,thick,rounded corners=0mm, line width = 0.5mm,xshift={0mm},yshift={0mm},
  turtle/distance=1cm,turtle={home,right=0,
    forward, right, forward, right, forward, left, forward, right, forward, right, forward, left, forward, left, forward, right, forward, right, forward, right, forward, left, forward, left, forward, right, forward, left, forward, left, forward
      }];
\end{scope}

\begin{scope}[shift={(10,1)},scale={0.35355},rotate={-225}]
\draw [->,thick,rounded corners=0mm, line width = 0.5mm,xshift={0mm},yshift={0mm},
  turtle/distance=1cm,turtle={home,right=0,
forward, right, forward, right, forward, left, forward, right, forward, right, forward, left, forward, left, forward, right, forward, right, forward, right, forward, left, forward, left, forward, right, forward, left, forward, left, forward, right, forward, right, forward, right, forward, left, forward, right, forward, right, forward, left, forward, left, forward, left, forward, right, forward, right, forward, left, forward, left, forward, right, forward, left, forward, left, forward
}];
\end{scope}

\begin{scope}[shift={(13,1)},scale={0.35355},rotate={-225}]

  \draw [gray!30!white,thick,rounded corners=0mm, line width = 0.5mm,xshift={0mm},yshift={0mm},
  turtle/distance=1cm,turtle={home,right=0,
forward, right, forward, right, forward, left, forward, right, forward, right, forward, left, forward, left, forward, right, forward, right, forward, right, forward, left, forward, left, forward, right, forward, left, forward, left, forward, right, forward, right, forward, right, forward, left, forward, right, forward, right, forward, left, forward, left, forward, left, forward, right, forward, right, forward, left, forward, left, forward, right, forward, left, forward, left, forward
}];

\draw [->,rounded corners=0mm, line width = 0.5mm,xshift={0mm},yshift={0mm},
  turtle/distance=0.707cm,turtle={home,right=-45,
forward, right, forward, right, forward, left, forward, right, forward, right, forward, left, forward, left, forward, right, forward, right, forward, right, forward, left, forward, left, forward, right, forward, left, forward, left, forward, right, forward, right, forward, right, forward, left, forward, right, forward, right, forward, left, forward, left, forward, left, forward, right, forward, right, forward, left, forward, left, forward, right, forward, left, forward, left, forward, right, forward, right, forward, right, forward, left, forward, right, forward, right, forward, left, forward, left, forward, right, forward, right, forward, right, forward, left, forward, left, forward, right, forward, left, forward, left, forward, left, forward, right, forward, right, forward, left, forward, right, forward, right, forward, left, forward, left, forward, left, forward, right, forward, right, forward, left, forward, left, forward, right, forward, left, forward, left, forward
     }];

\end{scope}

  \end{tikzpicture}
  }
  \caption{Iterates $\mathcal C_0$ to $\mathcal C_6$  of Heighway's dragon curve.
  }
  \label{fig:Heighwaycurve}

  \end{figure}
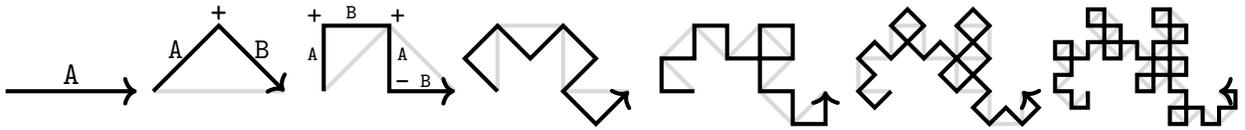

    \subsection{Construction (B): alternating triangles}
    The dragon curve
    may be described by setting $\mathcal C_0$ to be
    a unit line.
    Then $\mathcal C_n$ is obtained from
$\mathcal C_{n-1}$ as follows.
    Place right angle
    isosceles triangles on each edge of $\mathcal C_{n-1}$,
    with hypotenuse along the edge, with the
    triangles on alternating sides of the curve.  Then remove the edges of
    the original curve, as shown in 
    Figure~\ref{fig:Heighwaycurve}, where $\mathcal C_{n-1}$ is shown in gray,
    under $\mathcal C_n$ in black, so the triangles are formed with
    two black sides, and a gray hypotenuse.

    \subsection{Construction (C): L-systems}
    \label{subsec:C}
    Another method of describing the curves $\mathcal C_n$ is by the use of
    turtle geometry \cite{turtle} 
    and L-systems  \cite{LINDENMAYER1968300}.
    This is the method used in this
    article.

    An L-system $\mathcal L=(\Omega,A,P)$
    is a rewriting system, defined to be a triple, consisting of
    an alphabet of symbols, $\Omega$; a starting symbol
    $A\in\Omega^*$, called an axiom; and a function $P:\Omega\rightarrow
    \Omega^*$, from $\Omega$ to the set $\Omega^*$ of
    of finite length words with letters in $\Omega$.
    This induces a function $P:\Omega^*\rightarrow\Omega^*$;
    each letter $x$ of a word in $\Omega^*$ is replaced by $P(x)$.
    In the case of the Heighway dragon, we have
    $\Omega=\{A,B,+,-\}$, the axiom is $A$, and the function is given by
    $$P(A)=A+B, P(B)=A-B, P(+)=+, P(-)=-.$$
    The curve $\mathcal C_n$ is defined to be the path
    corresponding to the word $P^n(A)$,
    by following the letters of $P^n(A)$ as instructions for building
    a curve according to a turtle geometry construction \cite{turtle},
    with the symbols of $\Omega$ interpreted as
\begin{table}[H]
    \begin{tabular}{ll}
      $A$ & make a horizontal unit move\\
      $B$ & make a vertical unit move\\
      $+$ & turn $90^\circ$ clockwise (no move)\\
     $-$ & turn $90^\circ$ counterclockwise (no move)\\
    \end{tabular}
\end{table}
For example, labels in $\Omega$ are drawn on $\mathcal C_n$
for $n=0,1,2$ in Figure~\ref{fig:Heighwaycurve}.

   \subsection{The L-system for the boundary of
     $\mathcal C_n$.}
\label{sec:1.4}

   In \cite{Verrill_TCS}, it is shown that the boundary of the
   curve $\mathcal C_n$, or more accurately, the boundary of
   a polyomino $\mathcal S_n$ containing $\mathcal C_n$, can be described
   by an L-system.
   The polyomino $\mathcal S_n$ is defined to be the union of a collection
   of squares, each having diagonal one edge of $\mathcal C_n$,
   as shown in Figure~\ref{fig:Heighwayboundary}.

   The L-system $\mathcal L_1=(\Omega_1,Rr,P_1)$
   for the boundary of $\mathcal S_n$ is given by taking
   $\Omega=\{\R,\r,\L,\ll,\S,\s\}$, with $P_1$ given by
   \begin{equation}
     \label{eqn:L-systemforboundary}
   \R\mapsto \R\r,\rule{1cm}{0cm}
    \r\mapsto \S,\rule{1cm}{0cm}
    \L\mapsto \S,\rule{1cm}{0cm}
    \ll\mapsto \L\ll,\rule{1cm}{0cm}
    \S\mapsto \R\ll,\rule{1cm}{0cm}
        \s\mapsto \L\r.
    \end{equation}
    We consider an element $(x,y)$ of $\mathbb Z^2$ to be
    even or odd according to whether the parity of $x+y$ is even or odd.
    Each symbol of $\Omega_1$ corresponds to a path element in the turtle geometry sense,
    which consists of (1) starting from a point $(x,y)\in\mathbb Z^2$,
    a diagonal movement of length $1/\sqrt{2}$, continuing in an already specified direction, (2) no turn, or
    a  turn of $90^\circ$ left or right (3) another diagonal movement
    of length $1/\sqrt{2}$, resulting in the path which ends at one of the eight
    neighbouring points of $\mathbf Z^2$, i.e., $(x+\epsilon,y+\delta)$
    for $\epsilon,\delta\in\{-1,0,1\}$, not both zero.    
    Additional positional information gives the parity of the starting vertex.
    The symbols of $\Omega_1$ correspond to two halves of an edge of
    $\mathcal S_n$, rather than straight line edge segments in the case
    of $\mathcal C_n$ and letters in $\Omega$.
The direction of turn, or lack of turn, for each symbol, and the
starting parities, are as follows:
   \begin{table}[H]
     \begin{tabular}{ll}
       \R &  right turn in the middle of the path, starting point even\\
       \r &  right turn in the middle of the path, starting point odd\\
       \L &  left turn in the middle of the path, starting point even\\
       \ll &  left turn in the middle of the path, starting point odd\\
       \S &  no turn in the middle of the path, starting point even\\
       \s &  no turn in the middle of the path, starting point odd\\       
       \end{tabular}
   \end{table}

    Figure~\ref{fig:Heighwayboundary}
   illustrates the relationship
   between $\mathcal C_n$ and $\mathcal S_n$.  In the figure,
   in passing from $\mathcal C_n$ to $\mathcal S_{n+1}$, the unit lengths
   are drawn scaled down by a factor of $\sqrt{2}$, and the unit grid
   rotates by $45^\circ$ counterclockwise.
   The starting point is always $(0,0)$, which is even.
   The parity of the vertices of the grid through which
$\mathcal C_n$ and
   the boundary of
$\mathcal S_n$ passes are marked by black and white dots for
even and odd vertices respectively.
For example, the left boundary of $\mathcal S_4$ is described by the word
$\R\r\S\R\ll\R\r\L\ll$.

\begin{figure}[H]
  \begin{tikzpicture}[scale={0.8}]

   \begin{scope}[shift={(-4,1)},scale={2}]

     \begin{scope}[rotate={-90}]
\draw [gray!40!white,rounded corners=0mm, line width = 0.25mm,xshift={0mm},yshift={0mm},
  turtle/distance=1cm,turtle={home,forward
}];   \end{scope}
   \end{scope}

  \begin{scope}[shift={(-4,1)},scale={0.707}]

     \begin{scope}[rotate={-45}]
\draw [blue,thick,rounded corners=0mm, line width = 0.5mm,xshift={0mm},yshift={0mm},
  turtle/distance=2cm,turtle={home,forward,right, forward
}];   \end{scope}

     \begin{scope}[rotate={-135}]
\draw [red,thick,rounded corners=0mm, line width = 0.5mm,xshift={0mm},yshift={0mm},
  turtle/distance=2cm,turtle={home,forward,left, forward
}];   \end{scope}
     
     \filldraw (0,0) circle (0.15);
     \draw[fill=white] (2.75,0) circle (0.15);

     \node at (1.4,1.6){\tiny{\texttt{R}}};
          \node at (1.4,-1.6){\tiny{\texttt{L}}};
   \end{scope}

   \begin{scope}[shift={(-1.5,1)},scale={1.414},rotate={-45}]

\draw [gray!40!white,rounded corners=0mm, line width = 0.25mm,xshift={0mm},yshift={0mm},
  turtle/distance=1cm,turtle={home,forward, right, forward
}];
      \end{scope}
   \begin{scope}[shift={(-1.5,1)},scale={2}]

\draw [blue,rounded corners=0mm, line width = 0.5mm,xshift={0mm},yshift={0mm},
  turtle/distance=0.5cm,turtle={home,
    forward, right, forward, forward, right, forward
}];

\draw [red,rounded corners=0mm, line width = 0.5mm,xshift={0mm},yshift={0mm},
  turtle/distance=0.5cm,turtle={home,right=90,
    forward, forward
}];

                 \path(0,0)--++(0,0.6) node[yshift={0em}]{\tiny{\texttt{R}}}
                 --++(1,0)node[xshift={0em}]{\tiny{\texttt{r}}}
                 ;

                 \node at (0.5,-.1){\tiny{\texttt{S}}};

                 \filldraw (0,0) circle (0.05);
                      \filldraw (1,0) circle (0.05);
     \draw[fill=white] (0.5,0.5) circle (0.05);
                 
      \end{scope}

    \begin{scope}[shift={(1.35,1)},scale={1}]

\draw [gray!40!white,rounded corners=0mm, line width = 0.25mm,xshift={0mm},yshift={0mm},
  turtle/distance=1cm,turtle={home,right=0,
forward, right, forward, right, forward, left, forward
      }];
    \end{scope}

    \begin{scope}[shift={(1.35,1)},scale={1.414}]
      \node at (0.35,0.2){\tiny{\texttt{R}}};
            \node at (1.05,-0.45){\tiny{\texttt{l}}};
     \begin{scope}[rotate={45}]

\draw [blue,rounded corners=0mm, line width = 0.5mm,xshift={0mm},yshift={0mm},
  turtle/distance=0.5cm,turtle={home,
forward, right, forward, forward, right, forward, forward, forward
}];

\draw [red,rounded corners=0mm, line width = 0.5mm,xshift={0mm},yshift={0mm},
  turtle/distance=0.5cm,turtle={home,right=90,
forward, right, forward, forward, left, forward
}];

    \begin{scope}[scale={0.707},rotate={-135}]
                 \path(0,0)--++(-.5,-.5) node[yshift={-0.3em}]{\tiny{\texttt{R}}}
                 --++(-1,1)node[xshift={-0.3em}]{\tiny{\texttt{r}}}
                 --++(1,1)node[xshift={0.2em},yshift={0.3em}]{\tiny{\texttt{S}}}
                 ;

 \filldraw (0,0) circle (0.08);
                      \filldraw (-1,1) circle (0.08);
                      \draw[fill=white] (0,1) circle (0.08);
                      \draw[fill=white] (-1,0) circle (0.08);
                      \filldraw (0,2) circle (0.08);

       \end{scope}
     \end{scope}
\end{scope}

    \begin{scope}[shift={(4,1)},scale={0.707},rotate={45}]

\draw [gray!40!white,rounded corners=0mm, line width = 0.25mm,xshift={0mm},yshift={0mm},
  turtle/distance=1cm,turtle={home,right=0,
forward, right, forward, right, forward, left, forward, right, forward, right, forward, left, forward, left, forward
      }];
    \end{scope}

    \begin{scope}[shift={(4,1)},scale={1}]

\draw [blue,thick,rounded corners=0mm, line width = 0.5mm,xshift={0mm},yshift={0mm},
  turtle/distance=0.5cm,turtle={home,right=-90,
    forward, right, forward,forward, right, forward, forward, forward, forward, right, forward, forward, left,
    forward
}];

\draw [red,thick,rounded corners=0mm, line width = 0.5mm,xshift={0mm},yshift={0mm},
  turtle/distance=0.5cm,turtle={home,
    forward, right, forward,forward, right, forward, forward, left,
    forward, forward, left,
    forward
}];

    \begin{scope}[scale={0.707},rotate={-45}]
                 \path(0,0)--++(-.5,-.5) node[yshift={-0.3em}]{\tiny{\texttt{R}}}
                 --++(-1,1)node[xshift={-0.3em}]{\tiny{\texttt{r}}}
                 --++(1,1)node[xshift={-0.3em},yshift={0.3em}]{\tiny{\texttt{S}}}
                 --++(1,1)node[yshift={0.3em}]{\tiny{\texttt{R}}}
                 --++(1,-1)node[shift={(0.25em,0.25em)}]{\tiny{\texttt{l}}}
                 ;

\path(0,0)--++(-.5,.5) node[xshift={0.3em},yshift={-0.3em}]{\tiny{\texttt{R}}}
                 --++(1,1)node[xshift={-0.3em},yshift={-0.3em}]{\tiny{\texttt{r}}}
                 --++(1,-1)node[xshift={-0.3em},yshift={0em}]{\tiny{\texttt{L}}}
                 --++(1,1)node[yshift={-0.3em}]{\tiny{\texttt{r}}}
                 ;
                 
                  \filldraw (0,0) circle (0.08);
                      \filldraw (-1,1) circle (0.08);
                      \draw[fill=white] (0,1) circle (0.08);
                      \draw[fill=white] (-1,0) circle (0.08);
                      \draw[fill=white] (2,1) circle (0.08);
                      \filldraw (0,2) circle (0.08);
                      \filldraw (1,1) circle (0.08);
                      \filldraw (2,2) circle (0.08);                      
\draw[fill=white] (1,2) circle (0.08);                      
        
       \end{scope}
      \end{scope}

       \begin{scope}[shift={(7,1)},scale={0.5},rotate={90}]
\draw [gray!40!white,rounded corners=0mm, line width = 0.25mm,xshift={0mm},yshift={0mm},
  turtle/distance=1cm,turtle={home,right=0,
forward, right, forward, right, forward, left, forward, right, forward, right, forward, left, forward, left, forward, right, forward, right, forward, right, forward, left, forward, left, forward, right, forward, left, forward, left, forward
}];
       \end{scope}

       \begin{scope}[shift={(7,1)},scale={0.707}]
           \begin{scope}[rotate={135}]

\draw [blue,rounded corners=0mm, line width = 0.5mm,xshift={0mm},yshift={0mm},
  turtle/distance=0.5cm,turtle={home,right=0,
forward, right, forward, forward, right, forward, forward, forward, forward, right, forward, forward, left, forward, forward, right, forward, forward, right, forward, forward, left, forward, forward, left, forward
}];

\draw [red,rounded corners=0mm, line width = 0.5mm,xshift={0mm},yshift={0mm},
  turtle/distance=0.5cm,turtle={home,right=90,
    forward, right, forward, forward, right, forward, forward,forward, forward,
    forward, forward, left, forward, forward, left, forward
}];

           \end{scope}
               \begin{scope}[scale={0.707}]
                 \path(0,0)--++(-.5,-.5) node[yshift={-0.3em}]{\tiny{\texttt{R}}}
                 --++(-1,1)node[xshift={-0.3em}]{\tiny{\texttt{r}}}
                 --++(1,1)node[xshift={-0.3em},yshift={0.3em}]{\tiny{\texttt{S}}}
                 --++(1,1)node[yshift={0.3em}]{\tiny{\texttt{R}}}
                 --++(1,-1)node[yshift={0.5em}]{\tiny{\texttt{l}}}
                 --++(1,1)node[yshift={0.3em}]{\tiny{\texttt{R}}}
                 --++(1,-1)node[xshift={0.3em}]{\tiny{\texttt{r}}}
                 --++(-1,-1)node[xshift={0.5em}]{\tiny{\texttt{L}}}
                 --++(1,-1)node[yshift={0.5em}]{\tiny{\texttt{l}}}
                 --++(0.5,0.5)         
                 ;

  \filldraw (0,0) circle (0.1);
                      \filldraw (-1,1) circle (0.1);
                      \draw[fill=white] (0,1) circle (0.1);
                      \draw[fill=white] (-1,0) circle (0.1);
                      \draw[fill=white] (2,1) circle (0.1);
                      \filldraw (0,2) circle (0.1);
                      \filldraw (1,1) circle (0.1);
                      \filldraw (2,2) circle (0.1);
                         \filldraw (2,0) circle (0.1);
                         \filldraw (3,1) circle (0.1);
                      \filldraw (3,-1) circle (0.1);                         
                      \filldraw (4,0) circle (0.1);   
                      \draw[fill=white] (1,2) circle (0.1);
                      \draw[fill=white] (3,2) circle (0.1);  
                      \draw[fill=white] (3,0) circle (0.1);
                      \draw[fill=white] (4,-1) circle (0.1);
                      
       \end{scope}
                      \end{scope}

 \begin{scope}[shift={(10,1)},scale={0.35355},rotate={-225}]
\draw [gray!40!white,thin,rounded corners=0mm, line width = 0.25mm,xshift={0mm},yshift={0mm},
  turtle/distance=1cm,turtle={home,right=0,
forward, right, forward, right, forward, left, forward, right, forward, right, forward, left, forward, left, forward, right, forward, right, forward, right, forward, left, forward, left, forward, right, forward, left, forward, left, forward, right, forward, right, forward, right, forward, left, forward, right, forward, right, forward, left, forward, left, forward, left, forward, right, forward, right, forward, left, forward, left, forward, right, forward, left, forward, left, forward
}];
\end{scope}

\begin{scope}[shift={(10,1)},scale={0.5},rotate={180}]
\draw [blue,thick,rounded corners=0mm, line width = 0.5mm,xshift={0mm},yshift={0mm},
  turtle/distance=0.5cm,turtle={home,right=0,
forward, right, forward, forward, right, forward, forward, forward, forward, right, forward, forward, left, forward, forward, right, forward, forward, right, forward, forward, left, forward, forward, left, forward, forward, right, forward, forward, right, forward, forward, forward, forward, forward, forward, left, forward, forward, left, forward
}];

\draw [red,thick,rounded corners=0mm, line width = 0.5mm,xshift={0mm},yshift={0mm},
  turtle/distance=0.5cm,turtle={home,right=90,forward,
    right, forward, forward,right, forward, forward, forward, forward,right, forward, forward, left,  forward, forward,right,  forward, forward,left,  forward, forward, forward, forward,left,  forward, forward,left,forward
    }];

   \begin{scope}[scale={0.707},rotate={-135}]
 \filldraw (0,0) circle (0.2);
                      \filldraw (-1,1) circle (0.2);
                      \draw[fill=white] (0,1) circle (0.2);
                      \draw[fill=white] (-1,0) circle (0.2);
                      \draw[fill=white] (2,1) circle (0.2);
                      \filldraw (0,2) circle (0.2);
                      \filldraw (1,1) circle (0.2);
                      \filldraw (2,2) circle (0.2);
                         \filldraw (2,0) circle (0.2);
                         \filldraw (3,1) circle (0.2);
                      \filldraw (3,-1) circle (0.2);                         
                      \filldraw (4,0) circle (0.2);   
                      \draw[fill=white] (1,2) circle (0.2);
                      \draw[fill=white] (3,2) circle (0.2);  
                      \draw[fill=white] (3,0) circle (0.2);
                      \draw[fill=white] (4,-1) circle (0.2);

                      \draw[fill=white] (5,0) circle (0.2);
                      \draw[fill=white] (3,-2) circle (0.2);  
                      \draw[fill=white] (2,-1) circle (0.2);
                      \draw[fill=white] (2,-3) circle (0.2);
                      \draw[fill=white] (3,-4) circle (0.2);
                      \draw[fill=white] (4,-5) circle (0.2);
                      \filldraw (2,-2) circle (0.2);
                      \filldraw (2,-4) circle (0.2);
                      \filldraw (3,-5) circle (0.2);
                      \filldraw (4,-4) circle (0.2);
                      \filldraw (5,-1) circle (0.2);
                      \filldraw (4,-2) circle (0.2);
                      \filldraw (3,-1) circle (0.2);                      
   \end{scope}

\end{scope}

\begin{scope}[shift={(13.5,1)},scale={0.125},rotate={0}]

\draw [gray!40!white,thin,rounded corners=0mm, line width = 0.25mm,xshift={0mm},yshift={0mm},
  turtle/distance=1cm,turtle={home,right=90,
forward, right, forward, right, forward, left, forward, right, forward, right, forward, left, forward, left, forward, right, forward, right, forward, right, forward, left, forward, left, forward, right, forward, left, forward, left, forward, right, forward, right, forward, right, forward, left, forward, right, forward, right, forward, left, forward, left, forward, left, forward, right, forward, right, forward, left, forward, left, forward, right, forward, left, forward, left, forward, right, forward, right, forward, right, forward, left, forward, right, forward, right, forward, left, forward, left, forward, right, forward, right, forward, right, forward, left, forward, left, forward, right, forward, left, forward, left, forward, left, forward, right, forward, right, forward, left, forward, right, forward, right, forward, left, forward, left, forward, left, forward, right, forward, right, forward, left, forward, left, forward, right, forward, left, forward, left, forward, right, forward, right, forward, right, forward, left, forward, right, forward, right, forward, left, forward, left, forward, right, forward, right, forward, right, forward, left, forward, left, forward, right, forward, left, forward, left, forward, right, forward, right, forward, right, forward, left, forward, right, forward, right, forward, left, forward, left, forward, left, forward, right, forward, right, forward, left, forward, left, forward, right, forward, left, forward, left, forward, left, forward, right, forward, right, forward, left, forward, right, forward, right, forward, left, forward, left, forward, right, forward, right, forward, right, forward, left, forward, left, forward, right, forward, left, forward, left, forward, left, forward, right, forward, right, forward, left, forward, right, forward, right, forward, left, forward, left, forward, left, forward, right, forward, right, forward, left, forward, left, forward, right, forward, left, forward, left, forward, right, forward, right, forward, right, forward, left, forward, right, forward, right, forward, left, forward, left, forward, right, forward, right, forward, right, forward, left, forward, left, forward, right, forward, left, forward, left, forward, right, forward, right, forward, right, forward, left, forward, right, forward, right, forward, left, forward, left, forward, left, forward, right, forward, right, forward, left, forward, left, forward, right, forward, left, forward, left, forward, right, forward, right, forward, right, forward, left, forward, right, forward, right, forward, left, forward, left, forward, right, forward, right, forward, right, forward, left, forward, left, forward, right, forward, left, forward, left, forward, left, forward, right, forward, right, forward, left, forward, right, forward, right, forward, left, forward, left, forward, left, forward, right, forward, right, forward, left, forward, left, forward, right, forward, left, forward, left, forward, left, forward, right, forward, right, forward, left, forward, right, forward, right, forward, left, forward, left, forward, right, forward, right, forward, right, forward, left, forward, left, forward, right, forward, left, forward, left, forward, right, forward, right, forward, right, forward, left, forward, right, forward, right, forward, left, forward, left, forward, left, forward, right, forward, right, forward, left, forward, left, forward, right, forward, left, forward, left, forward, left, forward, right, forward, right, forward, left, forward, right, forward, right, forward, left, forward, left, forward, right, forward, right, forward, right, forward, left, forward, left, forward, right, forward, left, forward, left, forward, left, forward, right, forward, right, forward, left, forward, right, forward, right, forward, left, forward, left, forward, left, forward, right, forward, right, forward, left, forward, left, forward, right, forward, left, forward, left, forward
          }];

\end{scope}
    
\begin{scope}[shift={(13.5,1)},scale={0.0883},rotate={0}]
  \draw [blue,thick,rounded corners=0mm, line width = 0.5mm,xshift={0mm},yshift={0mm},
    turtle/distance=1cm,turtle={home,right=45,
  forward, right, forward, forward, right, forward, forward, forward, forward, right, forward, forward, left, forward, forward, right, forward, forward, right, forward, forward, left, forward, forward, left, forward, forward, right, forward, forward, right, forward, forward, forward, forward, forward, forward, left, forward, forward, left, forward, forward, right, forward, forward, right, forward, forward, forward, forward, right, forward, forward, left, forward, forward, right, forward, forward, left, forward, forward, forward, forward, left, forward, forward, left, forward, forward, right, forward, forward, right, forward, forward, forward, forward, right, forward, forward, left, forward, forward, right, forward, forward, right, forward, forward, left, forward, forward, left, forward, forward, right, forward, forward, right, forward, forward, left, forward, forward, left, forward, forward, right, forward, forward, left, forward, forward, forward, forward, left, forward, forward, left, forward, forward, right, forward, forward, right, forward, forward, forward, forward, right, forward, forward, left, forward, forward, right, forward, forward, right, forward, forward, left, forward, forward, left, forward, forward, right, forward, forward, right, forward, forward, forward, forward, forward, forward, left, forward, forward, left, forward, forward, right, forward, forward, right, forward, forward, forward, forward, forward, forward, left, forward, forward, left, forward, forward, right, forward, forward, right, forward, forward, left, forward, forward, left, forward, forward, right, forward, forward, left, forward, forward, forward, forward, left, forward, forward, left, forward
  }];

  \draw [red,thick,rounded corners=0mm, line width = 0.5mm,xshift={0mm},yshift={0mm},
    turtle/distance=1cm,turtle={home,right=135,
forward, right, forward, forward, right, forward, forward, forward, forward, right, forward, forward, left, forward, forward, right, forward, forward, right, forward, forward, left, forward, forward, left, forward, forward, right, forward, forward, right, forward, forward, forward, forward, forward, forward, left, forward, forward, left, forward, forward, right, forward, forward, right, forward, forward, forward, forward, right, forward, forward, left, forward, forward, right, forward, forward, left, forward, forward, forward, forward, left, forward, forward, left, forward, forward, right, forward, forward, right, forward, forward, forward, forward, right, forward, forward, left, forward, forward, right, forward, forward, left, forward, forward, forward, forward, left, forward, forward, left, forward, forward, right, forward, forward, right, forward, forward, forward, forward, forward, forward, left, forward, forward, left, forward, forward, right, forward, forward, right, forward, forward, left, forward, forward, left, forward, forward, right, forward, forward, left, forward, forward, forward, forward, left, forward, forward, left, forward
      }];
  
\end{scope}

  \end{tikzpicture}
  \caption{Iterates
    of the  boundary of $\mathcal S_n$, for $n=0$ to $5$ and $8$.
    The left boundary is in blue, the right boundary is in red, and
    the curve $\mathcal C_n$ is in gray.  The red and blue curves together
    form the boundary of the polyomino $\mathcal S_n$.
  }
  \label{fig:Heighwayboundary}

  \end{figure}
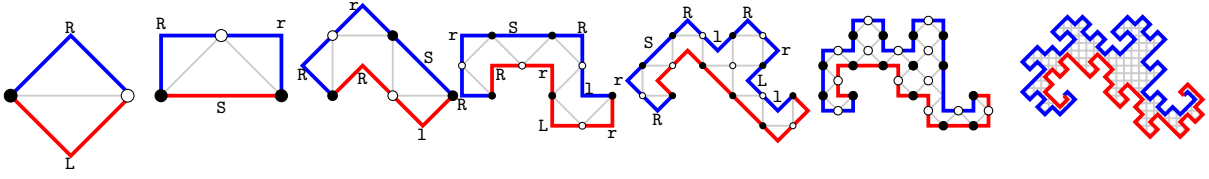

   In \cite{Verrill_TCS} it is shown that this construction produces
   a self-avoiding description of the boundary of
   $\mathcal S_n$.  Thus the number of edges of
   $\mathcal S_n$ is equal to the number of letters in
   $P^n(\R\r)$.
   We also write $||w||$ to mean the number of letters in
   $w$ for $w$ a word with letters in some alphabet.

   The L-system $\mathcal L_1$ gives
   a sequence of words $P^n(\R\r)$ describing the complete boundary
   of $\mathcal S_n$.  However, since $\mathcal C_n$ is a curve with
   different start and end points, it is considered to have two
   sides, a left and right side.
   So we also consider the curves defined by the L-systems
   $\mathcal L_L=(\Omega_1,\R,P_1)$ and
   $\mathcal L_R=(\Omega_1,\L,P_1)$, which describe the left and
   right boundaries of $\mathcal S_n$ respectively.

In this paper we consider the boundary of the curves to be the boundary of the
polyomino, $\mathcal S_n$. In the limit as $n$ tends to infinity, the
shapes $\mathcal S_n$ and $\mathcal C_n$ are the same.  However, the actual
unit length of the boundary is different for these two curves.
The difference between the two boundaries can be observed in Figure~\ref{fig:Heighwayboundary}, where the gray curve is $\mathcal C_n$, and the red curve
the boundary of $\mathcal S_n$.

\section{The left side of the Heighway dragon $\mathcal C_n$}

Given the L-system description of the boundary (\ref{eqn:L-systemforboundary}), we can prove
the following result, which has been a
long standing conjecture on the length of the boundary of the Heighway
dragon, a sequence starting
$2, 4, 8, 16, 28,\dots $, \cite[A227036]{oeis}.

\begin{theorem}
\label{thm:1}
  The length of the boundary of the Heighway dragon curve $\mathcal C_n$,
  that is, the number of horizontal and vertical line
  segments on the left side of the curve
  $\mathcal C_n$,
  is equal to
  the coefficients of the expansion of the Taylor series
  about $x=0$ of
  \begin{equation}
    \label{eqn:pre_taylor}
    \frac{2(1+x^2)}{(1-x)(1-x-2x^3)}
=\sum_{n=0}^\infty a_nx^n=2 + 4x + 8x^2 + 16x^3 + 28x^4 + 48x^5 + 84x^6 + \cdots.
  \end{equation}
\end{theorem}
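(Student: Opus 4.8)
The plan is to turn the geometric quantity into the combinatorics of the rewriting system $\mathcal L_1$ and then read off the generating function from the associated incidence matrix. Write $a_n$ for the number of horizontal and vertical segments on the left side of $\mathcal C_n$. The first and most delicate step is to express $a_n$ through the boundary word $P_1^n(\R\r)$ supplied by the L-system. Using the correspondence of Section~\ref{sec:1.4} together with the self-avoidance proved in \cite{Verrill_TCS}, I would argue that each of the $\|P_1^n(\R\r)\|$ letters of the boundary word accounts for exactly two unit segments (the two ``halves of an edge'' of $\mathcal S_n$), while a single additive correction compensates for the closure of the boundary, so that
\[
a_n = 2\,\|P_1^n(\R\r)\| - 2 .
\]
Pinning down this identity — the unit conversion and the constant $-2$ coming from the endpoints of $\mathcal C_n$ — is where the genuine geometric content sits, and I expect it to be the main obstacle; everything after it is linear algebra.

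Next I would abelianize. Let $\mathbf p_n\in\mathbb Z^6$ record the number of occurrences of each symbol of $\Omega_1$ in $P_1^n(\R\r)$, and let $M$ be the incidence matrix of $P_1$, so that $\mathbf p_n = M^n\mathbf p_0$ with $\mathbf p_0=\mathbf e_\R+\mathbf e_\r$ and $\|P_1^n(\R\r)\| = \mathbf 1^\top \mathbf p_n$. The symbol $\s$ is never produced by any rule and is absent from the axiom, so the orbit stays in the five-dimensional subspace spanned by $\R,\r,\L,\ll,\S$; I would work with the reduced matrix $M'$ there. Because $\s$ is never created and because the images of $\r$ and $\L$ coincide (both equal $\S$), $M$ has two zero eigenvalues, so its determinant has degree four; a cofactor expansion then gives the key computation
\[
\det(I - xM') = (1-x)(1-x-2x^3),
\]
exactly the denominator in \eqref{eqn:pre_taylor}.

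Finally I would assemble the generating function. Setting $F(x)=\sum_{n\ge 0} \|P_1^n(\R\r)\|\,x^n = \mathbf 1^\top (I-xM')^{-1}\mathbf p_0$, Cramer's rule presents $F$ as a quotient with the determinant above as denominator and numerator $\mathbf 1^\top \mathrm{adj}(I-xM')\mathbf p_0$, which I would compute to be $2 - x + x^2 - 2x^3$. Combining this with the identity of the first step and $\sum_{n\ge0} x^n = (1-x)^{-1}$ yields
\begin{align*}
\sum_{n\ge 0} a_n x^n &= 2F(x) - \frac{2}{1-x} \\
&= \frac{2(2 - x + x^2 - 2x^3) - 2(1-x-2x^3)}{(1-x)(1-x-2x^3)} \\
&= \frac{2(1+x^2)}{(1-x)(1-x-2x^3)},
\end{align*}
which is \eqref{eqn:pre_taylor}; a check that $1+x^2$ has no root in common with the denominator confirms the fraction is already reduced. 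Apart from the geometric identity of the first paragraph, the only point requiring care is the passage to $M'$ and the observation that $\s$ and the repeated column force the determinant down to degree four, so that the denominator is $(1-x)(1-x-2x^3)$ rather than a higher-degree polynomial.
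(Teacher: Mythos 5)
Your linear-algebra phase checks out: with the reduced matrix one indeed gets $\det(I-xM')=(1-x)(1-x-2x^3)$, the numerator $2-x+x^2-2x^3$ is correct, and the final assembly $2F(x)-2/(1-x)$ does simplify to $2(1+x^2)/\bigl((1-x)(1-x-2x^3)\bigr)$. The genuine gap is exactly where you predicted it, and the sketch you offer for that step is not a repairable half-argument but a misreading of the geometry. The letters of $P_1^n(\R\r)$ trace the boundary of the polyomino $\mathcal S_n$: each letter consists of two diagonal moves of length $1/\sqrt 2$, i.e.\ two edges of $\partial\mathcal S_n$. Those are \emph{not} the horizontal and vertical unit segments of $\mathcal C_n$ that the theorem counts, so ``each letter accounts for exactly two unit segments'' is false as a local statement. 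The correct conversion --- and the heart of the paper's proof --- is that a letter of the boundary word covers $1$, $2$ or $3$ segments of $\mathcal C_n$ according to whether it turns right ($\R,\r$), goes straight ($\S,\s$) or turns left ($\L,\ll$); equivalently, weight $=2-(\text{turn})$. Summing over the closed word gives $2\|P_1^n(\R\r)\|-(\#\text{right}-\#\text{left})$, and pinning down the correction requires a turning-number argument: the closed boundary turns through one full revolution, of which two quarter-turns occur at the two endpoints of $\mathcal C_n$ (polyomino corners that lie \emph{between} letters, not inside any letter), leaving $\#\text{right}-\#\text{left}=2$ among the letters. So the $-2$ is not a generic ``closure'' correction; it cannot be obtained without the weighted correspondence. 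Finally, what this weighted sum computes is $(\text{left count})+(\text{right count})$ of $\mathcal C_n$, which equals the left count of $\mathcal C_{n+1}$, not of $\mathcal C_n$: your identity as literally stated is off by one index. (The paper has the same shift, but it surfaces it explicitly, ending with $a_n=b_{n+1}$ ``up to the offset''; in your write-up it is silently absorbed into the unproved lemma.)

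For comparison, the paper sidesteps the full-boundary bookkeeping entirely: it works only with the left-side word $P_1^n(\R)$, applies the weight vector $(1,1,3,3,2)$ to $M^n v(\R)$, reads the recurrence $b_n=2b_{n-1}-b_{n-2}+2b_{n-3}-2b_{n-4}$ off the characteristic polynomial $x(x-1)(x^3-x^2-2)$, and matches it against the recurrence and initial terms of the Taylor coefficients. Your Cramer/resolvent derivation of the generating function is a legitimate and arguably more direct substitute for that recurrence-matching step, but it is conditional on the geometric lemma, which in your proposal is both unproved and justified by an argument that would not survive scrutiny; to fix it you need (i) the $1/2/3$ weighting, (ii) the turning-number count above, and (iii) either the unfolding identity $\text{left}(\mathcal C_{n+1})=\text{left}(\mathcal C_n)+\text{right}(\mathcal C_n)$ or the explicit index shift.
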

\begin{proof}
  For $\w\in\Omega_1^*$ and $\texttt{X}\in\Omega_1$
  let $a_{\texttt{X}}$ denote the number of times $\texttt{X}$ occurs in
$\w$. Set
  $$v(\w):=(a_{\R},a_{\r},a_{\L},a_{\l},a_{\S}),$$
  which gives a count of the number of times each letter of
  $\Omega_1$ occurs in $\w$.
  Note that we do not include $a_{\s}$ since it is not in the
  image of any element of $\Omega_1$ under the action of $P_1$,
  where $P_1$ is the map corresponding to the L-system for the
  boundary of the polyomino containing the
  Harter-Heighway dragon curve, as in
  \ref{eqn:L-systemforboundary}.
  Define a matrix $M$,
with columns $v(P(\texttt{X}))$ for ${\texttt{X}}= \R,\r,\L,\l,\S,\s$,
so that
$$v(P(\w))=Mv(\w).$$
We have
\begin{equation}
  \label{eqn:matrixforheighway}
M=
\begin{pmatrix}
  1 & 0 & 0 & 0 & 1\\
  1 & 0 & 0 & 0 & 0\\
  0 & 0 & 0 & 1 & 0\\  
  0 & 0 & 0 & 1 & 1\\
  0 & 1 & 1 & 0 & 0
\end{pmatrix}.
\end{equation}
Note that the left side of $\mathcal S_0$ corresponds to the word
$\R$, as illustrated in Figure~\ref{fig:Heighwayboundary}.
Since $M(v(\w))$ counts the occurrences of each letter in
$P(\w)$, we have that the components of
$M^n(v(\R))$ count the number of occurrences of each
letter of $\Omega_1$.
The total number of edges of the left
boundary of $\mathcal P_n$ is thus given by the sum of these components,
i.e., since $v(\R)=(1,0,0,0,0)$, the value
$$ (1,1,1,1,1)M^n(1,0,0,0,0)^T.$$
  Examining the relationship between the curve $\mathcal C_n$ and the
  polyomino $\mathcal S_n$,
  shown in Figure~\ref{fig:Heighwayboundary},
  we see that we have
  a correspondence:
\begin{center}
  \begin{tabular}{ccc}
    segment in left side of $\mathcal S_n$ &&  $\A$ or $\B$ segment in left side boundary of $\mathcal C_n$\\
    \hline
    $\R$ or $\r$  & $\rightarrow$& $1$\\
    $\S$ or $\s$  & $\rightarrow$& $2$\\
    $\L$ or $\ll$  & $\rightarrow$& $3$.
  \end{tabular}
\end{center}
Therefore, given that our L-system gives us the
matrix $M:=M_{P_1}$ in (\ref{eqn:matrixforheighway}), we must have that
the number of $\A$ and $\B$ segments comprising the left boundary of
$\mathcal C_n$ is given by
$$b_n:=(1,1,3,3,2)M_{P_1}^n(1,0,0,0,0)^T.$$
We compute that the first few terms of this sequence are
$1, 2, 4, 8, 16, 28$.
The characteristic polynomial of
$M$ is
$$x^5 - 2x^4 + x^3 - 2x^2 + 2x=x(x-1)(x^3-x^2-2).$$
So we have
\begin{equation}
  \label{eqn:firstMatrixcomp}
\begin{array}{ll}
0&=
(1,1,3,3,2)(M^{5+n} - 2M^{4+n} + M^{3+n} - 2M^{2+n} + 2M^{n+1})(1,0,0,0,0)^T\\
&=b_{n+5} - 2b_{n+4} + b_{n+3} - 2b_{n+2} + 2b_{n+1}
\end{array}
\end{equation}
from which we obtain, for $n\ge 5$ the relationship
\begin{equation}
\label{eqn:b_n_rel}
b_n=2b_{n-1} - b_{n-2} + 2b_{n-3} - 2b_{n-4}.
\end{equation}

Now we turn to the Taylor series.
With the $a_n$ as in  (\ref{eqn:pre_taylor}), and $a_n=0$ for $n<0$,
we have 
\begin{align*}
    2(1+x^2)&=\sum_{n=0}^\infty a_n(1-x)(1-x-2x^3)x^n\\
    &=\sum_{n=0}^\infty a_n(2x^4 - 2x^3 + x^2 - 2x + 1)x^n\\
    &= \sum_{n=4}^\infty (2a_{n-4} - 2a_{n-3} + a_{n-2}-2a_{n-1}+a_n)x^n.
\end{align*}
Therefore, equating coefficients of $x^n$, we have
\begin{align*}
  2 &= a_0 (n=0) \\
  0 &= -2a_0 +a_1 (n=1)\\ 
  2 &= a_0 - 2a_1 + a_2 (n=2)\\
  0 &= 2a_{n-4} - 2a_{n-3} + a_{n-2} - 2a_{n-1} + a_n (n\ge 3).
\end{align*}
So, we have that the $a_n$ satisfy a recurrence relation,
\begin{equation}
\label{eqn:a_n_rel}
  a_n=2a_{n-1} - a_{n-2} + 2a_{n-3} - 2a_{n-4}
  \end{equation}
with the initial terms given by

\begin{align*}
  a_n &= 0 \text{ for } n<0\\
  a_0 &= 2\\
  a_1 &= 4\\
  a_2 &= 8\\
  a_3 &=16
\end{align*}
Comparing
equations
(\ref{eqn:b_n_rel})
and
(\ref{eqn:a_n_rel})
we see that the the two sequences satisfy the
same recurrence relation.
Also the first few terms are the same, up to an offset,
with $a_n=b_{n+1}$.  Therefore, the two sequences
are equal (up to the offset).
\end{proof}

\section{Right side boundary of $\mathcal S_n$ and A203175.}
\label{sec:rightside}

In this section we discuss the number of edges of
the right side boundary of $\mathcal S_n$, and find that
  this is the same as the sequence \cite[A203175]{oeis}.
  In \cite[A203175]{oeis}, this sequence has several conjectured
  descriptions, which with our L-system $(\Omega_1,\R,P_1)$
  from Section~\ref{sec:1.4}
  can now be proved.
  We will consider each description in turn, and show that
  in each case we have the same recurrence relation satisfied,
and the same first few terms, so the sequences are the same.

\subsection{Right side boundary of $\mathcal S_n$}

In the notation of Section~\ref{subsec:C},
the right side of the boundary of
the polyomino $\mathcal S_n$ is described by the word
$P_1^n(\L)$, for $n\ge 0$.
We define a sequence
\begin{equation}
  \label{eqn:def:a_n}
  a_n=|P_1^n(\L)|,
  \end{equation}
which by \cite{Verrill_TCS} counts the length of the right side of the
boundary of $\mathcal S_n$.
The first few words $P_1^n(\L)$ are
$\L, \S, \R\ll, \R\r\L\r,
\R\r\S\S\L\ll,
\R\r\S\R\ll\R\l\S\L\ll$,
corresponding to the red (lower) sides of the
polyominos in Figure~\ref{fig:Heighwayboundary}.
So the first few terms of the sequence $a_n$ are
\begin{equation}
  \label{eqn:firstfew+a_n}
1,1,2,4,6,10.
\end{equation}
\begin{theorem}
\label{thm:2}
  The length of the right side boundary of the polyomino $\mathcal S_n$,
  containing the
  Harter-Heighway dragon curve $\mathcal C_n$ is given by the sequence
  $a_n$ with
\begin{equation}
  \label{eqn:recfor_boundary_a_n}
\begin{array}{ll}
  &a_n=a_{n-1} + 2a_{n-3}\text{ for } n\ge 4\\
&
a_0=1,\>\>
a_1=1,\>\>
a_2=2.
\end{array}
\end{equation}
\end{theorem}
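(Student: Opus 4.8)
The plan is to reuse the letter-counting machinery set up in the proof of Theorem~\ref{thm:1}. Writing $u=(1,1,1,1,1)$ for the row vector that sums the five coordinates of $v(\cdot)$ and $w_0=v(\L)=(0,0,1,0,0)^T$ for the count vector of the axiom $\L$, the length of the right boundary is
\[
a_n=|P_1^n(\L)|=u\,M^n w_0,
\]
where $M$ is the matrix in (\ref{eqn:matrixforheighway}). Exactly as in Theorem~\ref{thm:1}, the symbol $\s$ never occurs, since it lies in no image of $P_1$ and the axiom $\L$ contains none; hence the five-coordinate count $v$ faithfully records $P_1^n(\L)$ and the displayed formula for $a_n$ is exact.

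The target recurrence $a_n=a_{n-1}+2a_{n-3}$ has characteristic polynomial $x^3-x^2-2$, which is a \emph{proper} factor of the polynomial $x(x-1)(x^3-x^2-2)$ computed in Theorem~\ref{thm:1}. Cayley--Hamilton by itself only supplies the longer relation attached to this full degree-five polynomial; the substance of the theorem is that the extra factors $x$ and $(x-1)$ contribute nothing to this particular sequence. I would establish this by exhibiting the collapse already at the level of vectors. Set $c:=(M^3-M^2-2I)\,w_0$. Using $M^2w_0=v(\R\ll)$ and $M^3w_0=v(\R\r\L\ll)$, a short computation gives $c=(0,1,-1,0,0)^T$, and one then checks directly that $Mc=0$, i.e. $c\in\ker M$ (and incidentally $uc=0$).

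Granting $c\in\ker M$, the recurrence would follow immediately. Since $M$ commutes with every polynomial in $M$,
\[
(M^3-M^2-2I)\,M^{n-3}w_0=M^{n-3}c=M^{n-4}(Mc)=0\qquad(n\ge 4),
\]
so in fact the stronger \emph{vector} identity $w_n=w_{n-1}+2w_{n-3}$ holds for $n\ge 4$; applying $u$ yields $a_n=a_{n-1}+2a_{n-3}$. It then remains to verify the initial data, which I would do by listing $P_1^0(\L)=\L$, $P_1^1(\L)=\S$, $P_1^2(\L)=\R\ll$ and $P_1^3(\L)=\R\r\L\ll$, reading off $a_0=1$, $a_1=1$, $a_2=2$, and $a_3=4$ (the last needed to seed the stated range $n\ge 4$), in agreement with (\ref{eqn:firstfew+a_n}).

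The one genuinely substantive step, and the one I expect to demand the most care, is the passage from the degree-five relation guaranteed by Cayley--Hamilton to the claimed order-three relation. Everything hinges on the specific interaction of the start vector $v(\L)$ with the summation weight $u=(1,1,1,1,1)$, which forces the residual vector $c$ into $\ker M$; this is precisely the phenomenon that fails for the weight $(1,1,3,3,2)$ of Theorem~\ref{thm:1}, where a factor $(x-1)$ survives. I would also double-check the index range: because $uc=0$, the scalar recurrence in fact already holds at $n=3$, which is consistent with, and marginally stronger than, the stated bound $n\ge 4$.
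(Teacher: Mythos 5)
Your proof is correct, and it takes a genuinely different route from the paper's. The paper exploits the symmetry of $P_1$ under exchanging $\L\leftrightarrow\r$ and $\R\leftrightarrow\ll$: conjugation by a permutation matrix fixes $M$, so $|P_1^n(\r)|=|P_1^n(\L)|$, and the symmetrized vector $v(\L)+v(\r)=(0,1,1,0,0)^T$ is then checked to lie in $\ker(M^3-M^2-2I)$, whence the recurrence after multiplying by $(1,1,1,1,1)M^n$. You instead keep the single start vector $w_0=v(\L)$ and push the residual $c=(M^3-M^2-2I)w_0=(0,1,-1,0,0)^T$ into $\ker M$, using $Mc=0$ to annihilate it for $n\ge 4$. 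Both arguments ultimately rest on the same coincidence, namely $P_1(\L)=P_1(\r)=\S$, which makes the second and third columns of $M$ equal; the paper packages this as $(M^3-M^2-2I)\bigl(v(\L)+v(\r)\bigr)=0$, while you package it as $(0,1,-1,0,0)^T\in\ker M$. What your route buys: it avoids the symmetry lemma and the identity $|P_1^n(\r)|=|P_1^n(\L)|$ altogether (and with them the paper's slightly garbled display, which should read $2a_n=|P_1^n(\L)|+|P_1^n(\r)|$, the factor $2$ cancelling later), and it yields the stronger vector identity $M^nw_0=M^{n-1}w_0+2M^{n-3}w_0$ for $n\ge 4$. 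Your closing observation that $uc=0$, so the scalar recurrence already holds at $n=3$, matters more than you suggest: as stated, the theorem supplies only $a_0,a_1,a_2$ and the recurrence for $n\ge 4$, which by itself leaves $a_3$ undetermined; validity at $n=3$ (which the paper's proof also delivers implicitly, since its kernel identity holds for every $n\ge 0$) is what actually pins down the whole sequence. All your computations check against the paper's data: $Mw_0=v(\S)$, $M^2w_0=v(\R\ll)$, $M^3w_0=v(\R\r\L\ll)$, $c=(0,1,-1,0,0)^T$, and $Mc$ is the difference of the equal second and third columns of $M$, hence zero.
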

\begin{proof}
  As in the proof of Theorem~\ref{thm:1}, there is a matrix
  $M$, given by (\ref{eqn:matrixforheighway})
  such that $M^n(v(\L))$ counts the number of each kind of
  right boundary unit of $\mathcal S_n$.  Since $v(\L)=(0,0,1,0,0)$,
the  total number of right
  edges is
\begin{equation}
  a_n=(1,1,1,1,1)M^n(0,0,1,0,0)^T.
\end{equation}

Note that $P_1$ in
(\ref{eqn:L-systemforboundary})
is invariant under switching $\L\leftrightarrow \r$,
$\R\leftrightarrow \ll$ (in domain and image), which corresponds to
conjugating $M$ by the permutation matrix
\begin{equation}
P=
  \begin{pmatrix}
  0 & 0 & 0 & 1 & 0\\
  0 & 0 & 1 & 0 & 0\\
  0 & 1 & 0 & 0 & 0\\  
  1 & 0 & 0 & 0 & 0\\
  0 & 0 & 0 & 0 & 1
\end{pmatrix},
\end{equation}
i.e., $PMP=M$, and so we also have 
\begin{align*}
|P_1^n(\r)|&=(1,1,1,1,1)M^n(0,1,0,0,0)^T
=(1,1,1,1,1)PM^nP(0,1,0,0,0)^T\\
&=(1,1,1,1,1)M^n(0,0,1,0,0)^T=a_n=|P_1^n(\L)|.
\end{align*}
So we can write
$$a_n=|P_1^n(\L)|+|P_1^n(\r)| =
(1,1,1,1,1)M^n(0,1,1,0,0)^T.$$
Now notice that
$$M^3-M^2-2I=
\begin{pmatrix}
  -1& 0& 0& 1& 0\\ 0& -1& 1& 0& 0\\ 0& 1& -1& 0& 0\\ 1& 0& 0& -1& 0\\
  0& 0& 0& 0& 0
\end{pmatrix},
$$
and that $(0,1,1,0,0)=v(\r)+v(\L)$ is in the kernel of this matrix.
Thus
\begin{equation}
  \label{eqn:exampleofrecrel}
  (1,1,1,1,1)M^n(M^3-M^2-2I)(0,1,1,0,0)^T=0,
  \end{equation}
from which we obtain
\begin{equation}
  a_n - a_{n-1}-2a_{n-3}=0.
  \end{equation}
Together with the first few terms, given
in (\ref{eqn:firstfew+a_n}), we obtain the
recurrence relation
(\ref{eqn:recfor_boundary_a_n}).
  \end{proof}

\subsection{Generating function}

For integers $n\ge 1$,
let $b_n$ be the coefficients of the Taylor series expansion of
$x(1+x^2) / (1-x-2x^3)$, which is the generating function for these
coefficients, and is 
taken from \cite{Plouffe} and \cite[A203175]{oeis}.
By definition we have,
  \begin{equation}
    \label{eqn:pre_taylor}
    \frac{x(1+x^2)}{1-x-2x^3}
    =\sum_{n=1}^\infty b_nx^n=
x + x^2 + 2x^3 + 4x^4 + 6x^5 + 10x^6 + 18x^7 
 + \cdots.
  \end{equation}
\begin{theorem}
  \label{thm:fortheb_n}
  The coefficients $b_n$ of the Taylor series expansion of
  $x(1+x^2) / (1-x-2x^3)$ satisfy
\begin{equation}
  \label{eqn:recforTaylorb_n}
\begin{array}{ll}
  &b_n=b_{n-1} + 2b_{n-3}\text{ for } n\ge 4\\
&
b_1=1,\>\>
b_2=1,\>\>
b_3=2.
\end{array}
\end{equation}
\end{theorem}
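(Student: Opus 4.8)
The plan is to mirror the generating-function computation already carried out for the Taylor series in the proof of Theorem~\ref{thm:1}, now applied to the rational function $x(1+x^2)/(1-x-2x^3)$. First I would clear the denominator. Starting from the defining identity $\sum_{n\ge 1}b_nx^n = x(1+x^2)/(1-x-2x^3)$ and using $x(1+x^2)=x+x^3$, multiply both sides by $1-x-2x^3$ to obtain
$$x + x^3 = (1-x-2x^3)\sum_{n=1}^\infty b_nx^n.$$

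Next I would expand the right-hand side, adopting the convention $b_m=0$ for $m\le 0$, and collect the coefficient of $x^n$:
$$(1-x-2x^3)\sum_{n} b_nx^n = \sum_{n}\bigl(b_n - b_{n-1} - 2b_{n-3}\bigr)x^n.$$
Equating coefficients of $x^n$ on the two sides then yields everything. For $n\ge 4$ the left-hand polynomial $x+x^3$ contributes nothing, so $b_n - b_{n-1} - 2b_{n-3}=0$, which is exactly the recurrence~(\ref{eqn:recforTaylorb_n}). For the three low-order cases I would read off the initial data directly: matching the coefficient of $x$ gives $b_1=1$; the coefficient of $x^2$ gives $b_2-b_1=0$, so $b_2=1$; and the coefficient of $x^3$ gives $b_3-b_2-2b_0=1$, so $b_3=2$.

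The computation is entirely routine, and there is no real obstacle; the only point requiring care is the index bookkeeping, since here the series is indexed from $n=1$ rather than from $n=0$ as in Theorem~\ref{thm:1}. Getting the three low-order equations right—where both the vanishing convention $b_m=0$ for $m\le 0$ and the nonzero coefficients of $x+x^3$ on the left intervene—is what pins down $b_1,b_2,b_3$ and, together with the recurrence, determines the whole sequence. Having established~(\ref{eqn:recforTaylorb_n}), I would finally remark that this is the same recurrence and the same initial terms as the sequence $a_n$ of Theorem~\ref{thm:2}, so that $a_n=b_n$ for all $n\ge 1$, as claimed for the section's identification of the two sequences.
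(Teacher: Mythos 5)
Your proof is correct and follows essentially the same route as the paper: multiply the defining series of $x(1+x^2)/(1-x-2x^3)$ by $1-x-2x^3$, compare coefficients of $x^n$, and observe that the left-hand polynomial $x+x^3$ contributes nothing for $n\ge 4$, giving the recurrence in (\ref{eqn:recforTaylorb_n}). Your handling of the initial terms is in fact slightly more self-contained than the paper's, which simply reads $b_1,b_2,b_3$ off the displayed numerical expansion, whereas you derive them from the coefficients of $x$, $x^2$, $x^3$ together with the convention $b_m=0$ for $m\le 0$.

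One correction to your closing remark, which lies outside the theorem itself: the identification with Theorem~\ref{thm:2} is off by one. The sequence $a_n$ there starts at $n=0$ with $a_0=1$, $a_1=1$, $a_2=2$, while $b_n$ starts at $n=1$ with $b_1=1$, $b_2=1$, $b_3=2$; since both satisfy the same recurrence, the correct identification is $a_n=b_{n+1}$ for $n\ge 0$ (equivalently $a_{n-1}=b_n$), not $a_n=b_n$ — indeed $a_2=2\neq 1=b_2$.
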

\begin{proof}
  Multiplying both sides of (\ref{eqn:pre_taylor})
by $(1-x-2x^3)$, we have
\begin{align*}
    x+x^3&=\sum_{n=0}^\infty b_n(1-x-2x^3)x^n\\
    &=\sum_{n=0}^\infty b_n(1 - x - 2x^3)x^n\\
    &= \sum_{n=4}^\infty (b_{n} - b_{n-1} - 2b_{n-3})x^n.
\end{align*}
Therefore, by comparing coefficients of $x^n$, we see that
the $b_n$ satisfy the given recurrence relation.
The initial terms are taken from the expansion in
(\ref{eqn:pre_taylor}).
  \end{proof}

\subsection{Binary sequences without certain runs of zeros}
\label{subsec:S_n}

We now consider
binary sequences
without runs of zeros of length $1$ mod $3$,
following the
definition of Milan Janjic in the entry \cite[A203175]{oeis}.

\begin{definition}
  \label{def:S_n}
$S_n$ is the set of  binary sequences
  of length $n$ with no run of zeros of length $1$ mod $3$.
  Let $$c_n:=|S_n|.$$
\end{definition}
Note that here the length of a run of zeros
means the maximum length of any substring of
zeros, not the possible lengths of substrings of zeros,
for for example $00000\in S_5$. 
For example, $S_n$ and $c_n$ for
$1$ to $5$ are shown in the Table~\ref{tab:examplebinseq}.
\begin{table}
$$\begin{array}{|l|lllll|}
\hline
  n& 1 & 2 & 3 & 4 & 5\\
  c_n& 1 & 2 & 4 & 6 & 10\\
\hline
& 1(C) & 11(E)  & 111(E) & 1111(E) & 11111(E)\\
&   &     &     &      & 11100(B)\\
\cline{5-6}
&   &     &     & 1100(B) & 11001(D)\\
&   &     &     &      & 11000(A)\\
\cline{4-6}
S_n&   &     & 100(B) & 1001(D) & 10011(E)\\
\cline{5-6}
&   &     &     & 1000(A) & 10001(C)\\
\cline{3-6}
&   & 00(B)  & 001(D) & 0011(E) & 00111(E)\\
&   &     &     &      & 00100(B)\\
\cline{4-6}
&   &     & 000(A) & 0001(C) & 00011(E)\\
&   &     &        &         & 00000(B)\\
\hline
\end{array}
  $$
  \caption{Elements of $S_n$ for $n=1,\dots,5$,
    arranged according to the appending rules in Table~\ref{tab:binseqtypetransforms}.  Each sequence is followed by its type in brackets, according to
    Table~\ref{tab:binseqtype}.}
\label{tab:examplebinseq}
\end{table}
\begin{definition}
We define six types of sequences, as in Table~\ref{tab:binseqtype}.
\begin{table}[H]
\begin{tabular}{|ll|l|}
  \hline
  name & description & example\\
  \hline
  A&  sequences ending in a string of $0<m\equiv 0\mod 3$ zeros & $1011000$\\
B&  sequences ending in a string of $0<m\equiv 2\mod 3$ zeros& $10100000$\\
C&  sequences ending in a string of $0<m\equiv 0\mod 3$ zeros, followed by 1& $10001$ \\
D&  sequences ending in a string of $0<m\equiv 2\mod 3$ zeros, followed by 1 & $ 001$\\
E&  sequences ending in 11& $100111$ \\
  \hline
\end{tabular}
\caption{Types of elements of $S_n$.}
\label{tab:binseqtype}
\end{table}
\end{definition}

Now we define rules for building sequences of length
$n+1$ from sequences of length $n$, as
in Table~\ref{tab:binseqtypetransforms}.

\begin{table}[H]
\begin{tabular}{|ll|}
    \hline
  A & add $1$ to get a sequence of type C\\
  B & add $0$ to get a sequence of type A\\
    & or add $1$ to get a sequence of type D\\
  C & add $1$ to get a sequence of type E\\
    & or remove the last $1$ and add $00$  to get a sequence of type B\\
  D & add $1$ to get a sequence of type E\\
  E & add $1$ to get a sequence of type E\\
    & or remove the last $1$, and add $00$ to get a sequence of type B\\
  \hline
\end{tabular}
\caption{how to transform elements of $S_n$ to elements of $S_{n+1}$.}
\label{tab:binseqtypetransforms}
\end{table}

We denote the power set of $S_{n+1}$ by $\mathcal P(S_{n+1})$.
The rules in Table~\ref{tab:binseqtypetransforms}. can be used to define a function as follows, where
$\vv$ denotes a binary sequence of length
$n$, and $\ww$
denotes a binary sequence of length
$n-2$.

\begin{equation}
  \label{eqn:Sn_fun}
\begin{array}{ll}
f_n:S_n &\rightarrow \mathcal P(S_{n+1})\\
\vv &\mapsto \begin{cases}
  \{\vv1\} & \text{ if } \vv \text{ has type A (image type C)}\\  
  \{\vv0,\vv1\} & \text{ if } \vv \text{ has type B (image types A, D)}\\
  \{\ww011,\ww000\} & \text{ if } \text{ if } \vv=\ww01  \text{ has type C (image types E, B)}\\  
  \{\vv1\} & \text{ if } \vv  \text{ has type D (image type E)}\\
  \{\ww111,\ww100\} & \text{ if } \vv=\ww11 \text{ has type E (image types E, B)}\\  
  \end{cases}
\end{array}
\end{equation}

\begin{lemma}
  \label{lem:bijection}
  We have a disjoint union
  $$S_{n+1}=\bigcup_{\vv\in S_n} f_n(\vv)$$
  and
    $$|S_{n+1}|=\sum_{\vv\in S_n} |f_n(\vv)|.$$
\end{lemma}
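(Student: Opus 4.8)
The plan is to realize the sets $f_n(\vv)$ as the fibres of a single well-defined \emph{parent map} $g\colon S_{n+1}\to S_n$. Concretely, I will show that for every $\mathbf u\in S_{n+1}$ there is exactly one $\vv\in S_n$ with $\mathbf u\in f_n(\vv)$, namely $\vv=g(\mathbf u)$. Existence of such a $\vv$ gives the covering $S_{n+1}=\bigcup_{\vv\in S_n}f_n(\vv)$; uniqueness gives that the union is disjoint; and the cardinality statement $|S_{n+1}|=\sum_{\vv\in S_n}|f_n(\vv)|$ then follows because the cardinality of a disjoint union is the sum of the cardinalities. Two bookkeeping facts underpin everything and I would record them first: (i) the five types of Table~\ref{tab:binseqtype} partition $S_n$ --- a word ending in $0$ lies in $S_n$ only if its maximal trailing zero-run has length $\equiv 0$ or $\equiv 2\bmod 3$, giving type A or B, while a word ending in $1$ either ends in $11$ (type E) or ends in $01$ (or is the single letter $1$), in which case the zero-run before the final $1$ again has length $\equiv 0$ or $\equiv 2\bmod 3$, giving type C or D; and (ii) each rule of Table~\ref{tab:binseqtypetransforms} sends a word of $S_n$ into $S_{n+1}$, i.e.\ no rule ever creates a zero-run of length $\equiv 1\bmod 3$ (the only nonobvious checks are the two ``remove the final $1$, append $00$'' rules, where a trailing run $0^{3j}$ becomes $0^{3j+2}$ and a trailing $1$ becomes $100$, both of length $\equiv 2$).

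The map $g$ is read off from the suffix of $\mathbf u$, and its three clauses mirror the three families of rules: words of $S_{n+1}$ ending in $1$ arise only from an ``append $1$'' move, words of type A only from the ``type B, append $0$'' move, and words of type B only from a ``remove the final $1$, append $00$'' move. Accordingly I set $g(\mathbf u)$ to be $\mathbf u$ with its final $1$ deleted when $\mathbf u$ ends in $1$; $\mathbf u$ with its final $0$ deleted when $\mathbf u$ has type A; and $\mathbf u$ with its final two $0$'s deleted and a single $1$ appended when $\mathbf u$ has type B. A short computation shows $g(\mathbf u)\in S_n$ in each case: deleting a trailing letter only shortens or terminates a zero-run, and in the type B case the appended $1$ caps the length-$\equiv 0$ run $0^{3k}$ that remains after the two deletions.

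It then remains to check the two directions $\mathbf u\in f_n(g(\mathbf u))$ and uniqueness of the parent. When $\mathbf u$ ends in $1$, existence is immediate because every one of the five types has an ``add $1$'' rule, so $f_n(g(\mathbf u))\ni g(\mathbf u)\,1=\mathbf u$; uniqueness holds because no ``remove $1$, append $00$'' branch can produce a word ending in $1$, and $\vv\,1=\mathbf u$ forces $\vv$. When $\mathbf u$ has type A, the only rule yielding a trailing run of length $\equiv 0$ is ``type B, append $0$'', so the parent is forced to be $\mathbf u$ with its last $0$ removed. The delicate case is $\mathbf u$ of type B, with trailing run $0^{3k+2}$: here I would verify that $g(\mathbf u)=\vv$ is genuinely of type C when $k\ge 1$ (then $\vv$ ends in $0^{3k}1$) and of type E when $k=0$ (then $\vv$ ends in $11$), but \emph{never} of type D --- because the run of zeros immediately before the final $1$ of $\vv$ has length $\equiv 0\bmod 3$, not $\equiv 2$ --- so the relevant ``remove $1$, append $00$'' branch of $f_n(\vv)$ is present and returns exactly $\mathbf u$.

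I expect this last type B analysis, together with the shortest words, to be the main obstacle. The subtlety is that a type B word of $S_{n+1}$ has two a priori candidate origins (a type C parent or a type E parent), and one must argue from its suffix that exactly one occurs and that the reconstructed $\vv$ lies in $S_n$ and carries a type possessing the needed rule --- in particular that $\vv$ is never of type D, which has no ``remove $1$, append $00$'' rule. The very short words must be handled by the operational form of the rules rather than the $\ww$-decomposition of \eqref{eqn:Sn_fun}: for instance the length-$1$ word $1$ has type C but admits no splitting $\vv=\ww01$, and one reads its image directly as $\{11,00\}$, with $00$ in turn being the type B word whose unique parent is this $1$. I would dispose of these as explicit base cases, after which the remainder is the routine suffix bookkeeping organized by the case analysis above.
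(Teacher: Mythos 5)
Your proposal is correct and takes essentially the same route as the paper's proof: a case analysis on the suffix of an element of $S_{n+1}$ (ends in $1$, type A, type B) showing it lies in $f_n(\vv)$ for exactly one $\vv\in S_n$, which is precisely the paper's existence-plus-uniqueness argument repackaged as an explicit parent map $g$. If anything, your version is slightly more careful than the paper's, since you explicitly rule out a type D parent and handle the degenerate words $1$ and $00$, which the paper glosses over.
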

\begin{proof}
  We must show that each element  $\x\in S_{n+1}$ is
  contained in exactly one of the sets of the form
  $f_n(\vv)$ for some $\vv\in S_n$.
  First we prove existence of some $\vv$ with $\x\in f_n(\vv)$.
  
  Suppose that
  $\x=\vv1$.  Then $\vv$ must be in $S_n$, and can have any type,
  and we have
  $\x\in f_n(\vv)$, since we
  can always add $1$ to any element of $S_n$ to
  obtain an element of $S_{n+1}$.  In other words, if $\x$ ends in a $1$,
  we can always remove it to obtain an element of $S_n$.

  Suppose that
  $\x=\vv0$.  Then either $\vv$ ends in
  a string of zeros of length $1$ or $2$ mod $3$.

  \begin{enumerate}
    \item
  In the case that $\vv$ ends in a string of $3k+2$
  zeros   (for some $k\in\mathbb Z$),
  $\vv\in S_n$ has type B, and $\x\in f_n(\vv)$.
  I.e., $\vv$ is obtained from $\x$ precisely by removing the last $0$
  from $\x$.
\item
    If $\vv$ ends in a string of $3k+1$ zeros,
  for example if $\x=100$,
  then $\vv\not\in S_n$.
  In this case, we have that $\x$ ends in at least two zeros.
  Either these are immediately preceded by a $1$ or a $0$. 
  Suppose  we have $\x=\y100$ for some word $\y$.
  Then $\x\in f_n(\y11)$, where $\y11$ has type E.
  E.g., $00100\in f_n(0011)$.
  \item
  If we are the the case where $\x=\y000$,
  then $\x\in f_n(\y01)$, where $\y01$ has type C.
  E.g., $001100000\in f_n(00110001)$.
  \end{enumerate}

  Now we must show that $\x$ is in $f_n(\vv)$ for some unique
  $\vv\in S_n$.
    \begin{enumerate}
    \item
  For the case that $\x$ ends in $1$, this is because
  we only obtain elements of $S_{n+1}$ in $f_n(\v)$
  by adding $1$ to the end of elements
  of $S_n$.
\item
  In the case that $\x$ ends in a zero, considering the definition, and
  (\ref{eqn:Sn_fun}),
  $\x$ must be in $f_n(\vv)$ for some $\vv$ of type B, C, or E,
  corresponding to $\x$ having type A, B, B respectively.
  \item
  In the case where $\x$ has type A, ending in $3k$ zeros, we can only
  obtain $\x$ from an element of $S_n$ by removing the last $0$,
  to obtain an element of type B.  So this gives a unique element
  $\vv$ with $\x\in f_n(\vv)$.
  \item 
    In the case where $\x$ has type B, and ends in at least two zeros,
    in which case
    $\vv$ is obtained uniquely from $\x$ by removing the last two zeros and replacing with $1$, resulting in a uniquely defined element
    either of type C or E.
      \end{enumerate}
Thus we obtain the stated equalities. 
\end{proof}

\begin{theorem}
\label{thm:recforc_n}
  The sequence $c_n=|S_n|$ satisfies a recurrence relation
  \begin{equation}
    \begin{array}{l}
      c_n=c_{n-1}+2c_{n-3}\text{ for }n\ge 4\\
c_1=1,\>c_2=2,\>c_3=4
    \end{array}
  \end{equation}
\end{theorem}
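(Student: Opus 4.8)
The plan is to refine the counting of Lemma~\ref{lem:bijection} by tracking, not just the total $c_n=|S_n|$, but the number of sequences of each of the five types A--E from Table~\ref{tab:binseqtype}. Write $A_n,B_n,C_n,D_n,E_n$ for the number of elements of $S_n$ of each respective type, so that $c_n=A_n+B_n+C_n+D_n+E_n$. The transformation rules of Table~\ref{tab:binseqtypetransforms}, together with the fact (Lemma~\ref{lem:bijection}) that $S_{n+1}=\bigcup_{\vv\in S_n}f_n(\vv)$ is a \emph{disjoint} union, record exactly which types at length $n+1$ arise from which types at length $n$: type A yields one type-C sequence; type B yields one type-A and one type-D; type C yields one type-E and one type-B; type D yields one type-E; and type E yields one type-E and one type-B. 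Summing the contributions into each target type gives the linear recurrences
\[
A_{n+1}=B_n,\quad B_{n+1}=C_n+E_n,\quad C_{n+1}=A_n,\quad D_{n+1}=B_n,\quad E_{n+1}=C_n+D_n+E_n,
\]
i.e. $(A_{n+1},\dots,E_{n+1})^T=N(A_n,\dots,E_n)^T$ for the $5\times5$ matrix $N$ collecting these coefficients, with $c_n=(1,1,1,1,1)\,N^{\,n-1}(0,0,1,0,0)^T$ since $S_1=\{1\}$ is a single type-C sequence.

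From here there are two routes. The systematic one, matching the proofs of Theorems~\ref{thm:1} and~\ref{thm:2}, is to compute the characteristic polynomial of $N$ and check that it is divisible by $x^3-x^2-2$; this factor, applied as in~\eqref{eqn:exampleofrecrel}, forces $c_n$ to satisfy $c_n=c_{n-1}+2c_{n-3}$, after which one reads $c_1=1,\ c_2=2,\ c_3=4$ directly off Table~\ref{tab:examplebinseq}.

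The second, more self-contained route is a short elimination. First note $A_{n+1}=B_n=D_{n+1}$, so $A_n=D_n$ for $n\ge2$. Summing the five relations gives $c_{n+1}=c_n+(B_n+C_n+E_n)$; set $g_n:=B_n+C_n+E_n$. Substituting the recurrences for $B_n,C_n,E_n$ and using $A_{n-1}=D_{n-1}$ yields $g_n=2(A_{n-1}+C_{n-1}+E_{n-1})$, and substituting once more gives $A_{n-1}+C_{n-1}+E_{n-1}=B_{n-2}+A_{n-2}+C_{n-2}+D_{n-2}+E_{n-2}=c_{n-2}$. Hence $g_n=2c_{n-2}$ and $c_{n+1}=c_n+2c_{n-2}$ for $n\ge3$, that is $c_n=c_{n-1}+2c_{n-3}$ for $n\ge4$; the base cases again come from the table.

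I expect the only genuine subtlety to be the passage from Table~\ref{tab:binseqtypetransforms} to the type-count recurrences: one must confirm that the type of a length-$(n+1)$ sequence is determined by the type of its unique Lemma~\ref{lem:bijection} preimage, so that the transition table can legitimately be transposed into recurrences on the counts (in particular that the ``remove the last $1$, append $00$'' operations on types C and E land in type B exactly as tabulated). Once that compatibility is granted, the remaining work---either the characteristic-polynomial factorization or the two-line elimination---is routine, and the identification $c_n=a_n=b_n$ follows since all three sequences obey the same recurrence with matching initial terms.
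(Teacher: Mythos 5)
Your proposal is correct and takes essentially the same approach as the paper: the type-count transition matrix you derive from Table~\ref{tab:binseqtypetransforms} and Lemma~\ref{lem:bijection} is exactly the paper's matrix $M$ (columns $A,\dots,E$, initial vector $(0,0,1,0,0)^T$ for the type-C string $1$), and your first route --- computing the characteristic polynomial and finding the factor $x^3-x^2-2$, then applying it as in (\ref{eqn:exampleofrecrel}) --- is precisely the paper's proof, the characteristic polynomial being $x^2(x^3-x^2-2)$. Your second, elimination-based route (using $A_n=D_n$ to show $c_{n+1}=c_n+2c_{n-2}$ directly) is a valid, more self-contained way of closing the argument, but it rests on the identical setup, so it is a variation in the final step rather than a genuinely different proof.
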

\begin{proof}
We can rewrite the map in (\ref{eqn:Sn_fun}) as follows:
\begin{align*}
  A &\mapsto C\\
  B &\mapsto AD\\
  C &\mapsto EB\\
  D &\mapsto E\\
  E &\mapsto EB  
\end{align*}
By Lemma~\ref{lem:bijection},
any element of $S_n$ is obtained uniquely from
some element of $S_{n-1}$.  Define a transition matrix
\begin{equation}
  M=\begin{pmatrix}
    0&1&0&0&0\\
    0&0&1&0&1\\
    1&0&0&0&0\\
    0&1&0&0&0\\
    0&0&1&1&1
  \end{pmatrix}
  \end{equation}
with columns corresponding
to $A, B, C, D, E$.
This tells us how to pass from elements of $S_n$ to elements of
$S_{n+1}$ by type.  
So we have that
the size of $S_n$ is given by
\begin{equation}
  |S_n| =
  (1,1,1,1,1)M^{n-1}(0,0,1,0,0)^T
  \end{equation}
where the vector $(0,0,1,0,0)$ corresponds to the initial string $1$
of length one.
We compute that the characteristic polynomial of $M$ is
$x^2(x^3 - x^2 - 2)$, and then, as in the
computation in (\ref{eqn:firstMatrixcomp}), we find that
\begin{equation}
  \label{eqn:c_nrecrel}
c_n=c_{c-1}+2c_{n-3}.
\end{equation}
The initial terms are as given in Table~\ref{tab:examplebinseq}.
\end{proof}

\subsection{Certain arrays with elements $0, 1, 2$.}
\label{subsec:A_n}
Sequence \cite[A203175]{oeis} has description as follows.

\begin{definition}
  \label{def:A_n}
  Let $A_n$ be the set of
  $n\times 2$ arrays, containing only elements of the set
  $\{0,1,2\}$, such that
  \begin{itemize}
    \item
      every 1 is immediately preceded by 0 to the left or above,
    \item
      no 0 is immediately preceded by a 0, either above or to the left,
    \item every 2 is immediately preceded by 0 1, in the two rows above.
  \end{itemize}
  I.e., if $m\in A_n$ has elements $m_{i,j}$, with $m_{0,0}=0$, then
  $m_{i,j}=1\Rightarrow m_{i-1,j}=0$ (and $i>0$)
  or $m_{i,j-1}=0$ (and $j>0$);
  $m_{i,j}=0\Rightarrow m_{i-1,j}\not=0$ (if $i>0$)
  and $m_{i,j-1}\not=0$ (if $j>0$);
  and   $m_{i,j}=2\Rightarrow m_{i,j-1}=1$ and $m_{i,j-2}=0$ (and $j>1$).
  Let
  \begin{equation}
    \label{eqn:def:d_n}
  d_n=|A_n|.
  \end{equation}
  \end{definition}

For example,
$A_1=\{(0,1)\}$, $A_2=\left\{\begin{pmatrix}0&1\\1&0\end{pmatrix}\right\}$
so $d_1=d_2=1$.
The elements of $A_5$ are shown in Figure~\ref{fig:tree1}, and $d_5=6$.

Just as we constructed $S_{n+1}$ from $S_n$ in the previous section,
we can construct $A_{n+1}$ from $A_n$ as follows.
We define ten different types of arrays, depending on the last two rows:
\begin{definition}
  \label{def:typeA-G}
  An $n\times 2$ array of elements $0,1,2$ is said to have
  type $A,B,C,D,E,F,G,H$ depending on the last row,
  according to the following table
    $$
  \begin{array}{ccccccccc}
    type & A       & B      & C    & D     & E    &  F     & G      & H \\
    row  & (0,1^0) & (0,1^x)& (0,2) &(1,0) & (1,2) & (2,0) & (2,1^0) & (2,1^x)
  \end{array}
  $$
\end{definition}
Here, in the second entry, $1^0$ means $1$ with a $0$ above it,
and $1^x$ means a $1$ with a $1$ or $2$ above it.
In the first column, $1$ always has a $0$ above it.
We can never have a row of the form $(1,1)$, since
this would have to be preceded by a row
$(0,0)$, and $0$ is not allowed to be next to $0$, so this
is impossible.
Similarly, $(2,2)$ would have to be preceded by $(1,1)$ so
is not allowed.
So the above list contains all the possible last rows of elements of
$A_n$ for all $n$ (not all of which will be achieved for all $n$).
We also refer to the last row as having the given type.

In the definition of $A_n$, we see that
the elements are built up in terms of the previous rows,
so all elements of $A_{n+1}$ can be obtained from
an element of $A_n$ by adding one more row which satisfies the
rules in Definition~\ref{def:A_n}.

Each type of matrix in $A_n$ can be extended to a matrix in $A_{n+1}$
by adding a row, with type
E.g., suppose a matrix $m$ in $A_n$ has type $A$,
and so ends with a row $m_n=(m_{n,1},m_{n,2})=(0,1^0)$.
Then for the next row, with new elements $(m_{n+1,1},m_{n+1,2})$,
we must have $m_{n+1,1}=1$, since $0$ in the first column can
only be followed by a $1$ below it.  Since $m_{n,2}$ has a $0$ above it,
we could have $m_{n+1,2}=2$.  Since
$m_{n+1,1}\not=0$, and $m_{n,2}\not=0$, we could have  $m_{n+1,2}=0$, but we can't
obtain $m_{n+1,2}=1$.   So type $A$ can be followed by type $D$ or
$E$.  By similar considerations, we obtain
the following table,
which shows all the possible ways of extending a
matrix of a given type in $A_n$ to a matrix of a given type in
$A_{n+1}$. We define a corresponding function $f$ on $\{A,B,C,D,E,F,G,H\}$
as in the column on the right in Table~\ref{tab:ABCDEFG}.

\begin{table}[H]
$$
  \begin{array}{l|l|l}
\text{    type of  } m_n & \text{possible type of } m_{n+1}&f\\  
\hline
A & D, E&  f(A)=\{D,E\}    \\
B & D   &  f(B)=\{D\} \\
C & D   &  f(C)=\{D\}  \\
D & A, G&  f(D)=\{A,G\}    \\
E & B, F&  f(E)=\{B,F\}    \\
F & A   &  f(F)=\{A\}      \\
G & B, C&  f(G)=\{B,C\}       \\
H & B   &  f(H)=\{B\}  
  \end{array}
  $$
  \caption{Rules for elements of $A_{n+1}$ following from elements of
    $A_n$.}
    \label{tab:ABCDEFG}
\end{table}

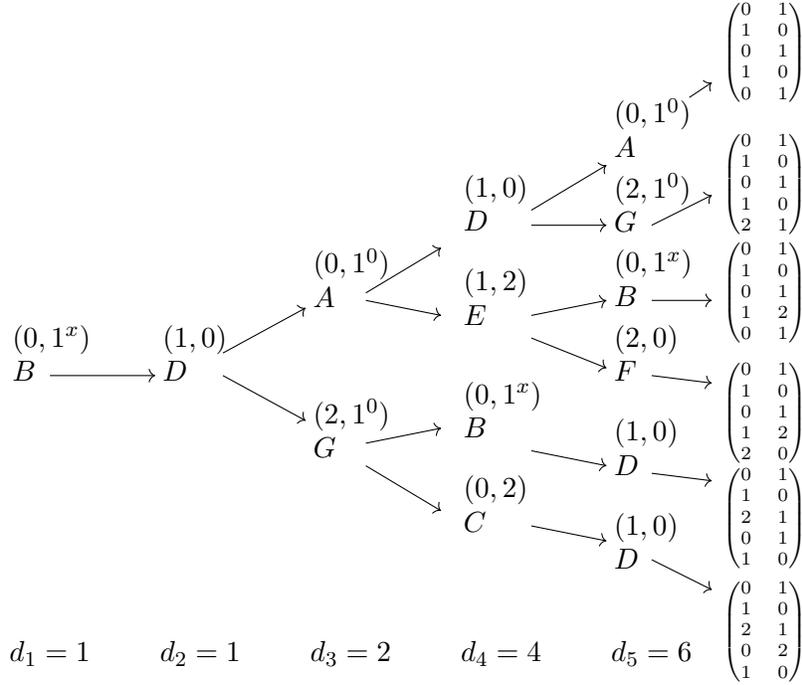
\begin{figure}
\begin{tikzpicture}
  \node at (0,0) {\parbox{1cm}{$(0,1^x)$\\ $B$}};
  \node at (2,0) {\parbox{1cm}{$(1,0)$\\ $D$}};
  \node at (4,1)  {\parbox{1cm}{$(0,1^0)$\\ $A$}};
  \node at (4,-1) {\parbox{1cm}{$(2,1^0)$\\ $G$}};
  \node at (6,2)  {\parbox{1cm}{$(1,0)$\\ $D$}};
  \node at (6,0.75) {\parbox{1cm}{$(1,2)$\\ $E$}};
  \node at (6,-0.75)  {\parbox{1cm}{$(0,1^x)$\\ $B$}};
  \node at (6,-2) {\parbox{1cm}{$(0,2)$\\ $C$}};
  \node at (8,3)  {\parbox{1cm}{$(0,1^0)$\\ $A$}};
  \node at (8,2) {\parbox{1cm}{$(2,1^0)$\\ $G$}};
  \node at (8,1)  {\parbox{1cm}{$(0,1^x)$\\ $B$}};
  \node at (8,0) {\parbox{1cm}{$(2,0)$\\ $F$}};
  \node at (8,-1.25)  {\parbox{1cm}{$(1,0)$\\ $D$}};
  \node at (8,-2.5) {\parbox{1cm}{$(1,0)$\\ $D$}};
  
  \draw[->](0,-0.3)--++(1.4,0);
  \draw[->](2.3,0)--++(1.1,0.6);
  \draw[->](2.3,-0.3)--++(1.1,-0.6);
  \draw[->](4.2,0.8)--++(1,0.6);
  \draw[->](4.2,0.7)--++(1,-0.2);
  \draw[->](4.2,-1.2)--++(1,0.2);
  \draw[->](4.2,-1.5)--++(1,-0.6);
 \draw[->](6.4,1.9)--++(1,0.6);
  \draw[->](6.4,1.7)--++(1,0);
  \draw[->](6.4,0.5)--++(1,0.2);
  \draw[->](6.4,0.2)--++(1,-0.4);
  \draw[->](6.4,-1.3)--++(1,-0.2);
  \draw[->](6.4,-2.3)--++(1,-0.2);

  \node at(9.5,4){\tiny$\begin{pmatrix}0&1\\1&0\\0&1\\1&0\\0&1\end{pmatrix}$};
  \node at(9.5,2.25){\tiny$\begin{pmatrix}0&1\\1&0\\0&1\\1&0\\2&1\end{pmatrix}$};
  \node at(9.5,0.8){\tiny$\begin{pmatrix}0&1\\1&0\\0&1\\1&2\\0&1\end{pmatrix}$};
  \node at(9.5,-0.8){\tiny$\begin{pmatrix}0&1\\1&0\\0&1\\1&2\\2&0\end{pmatrix}$};
  \node at(9.5,-2.2){\tiny$\begin{pmatrix}0&1\\1&0\\2&1\\0&1\\1&0\end{pmatrix}$};
  \node at(9.5,-3.7){\tiny$\begin{pmatrix}0&1\\1&0\\2&1\\0&2\\1&0\end{pmatrix}$};    
  \draw[->](8,-1.6)--++(0.8,-0.1);
  \draw[->](8,-0.3)--++(0.8,-0.1);
  \draw[->](8,0.7)--++(0.8,0);
  \draw[->](8,1.7)--++(0.8,0.4);
        \draw[->](8.5,3.4)--++(0.3,0.2);  
        \draw[->](8,-2.75)--++(0.8,-0.4);
        \node at (0,-4){$d_1=1$};
        \node at (2,-4){$d_2=1$};
        \node at (4,-4){$d_3=2$};
        \node at (6,-4){$d_4=4$};
        \node at (8,-4){$d_5=6$};
\end{tikzpicture}
\caption{Construction of elements of $A_5$, row by row,
  using the rules in Table~\ref{tab:ABCDEFG}}
\label{fig:tree1}
\end{figure}

\begin{theorem}
  \label{thm:forthed_n}
    The number of elements of $A_n$ in Definition~\ref{def:A_n}
    is given by
    \begin{equation}
      \begin{array}{l}
        d_n = d_{n-1}+2d_{n-3}\text{ for } n\ge 4\\
        d_1 = 1,\>
        d_2 = 1,\>
        d_3 = 2.
        \end{array}
      \end{equation}
  \end{theorem}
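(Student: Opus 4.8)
The plan is to follow the template of Theorems~\ref{thm:2} and~\ref{thm:recforc_n}: convert the row-by-row growth of the arrays into a linear map on type-counts, and extract the recurrence from a kernel relation. First I would record the combinatorial input as an analogue of Lemma~\ref{lem:bijection}. Deleting the bottom row of any $m\in A_{n+1}$ yields an element of $A_n$ (erasing the last row cannot violate any defining constraint of Definition~\ref{def:A_n}), and this predecessor is unique, so appending a row realizes every element of $A_{n+1}$ exactly once. The content here is to check that Table~\ref{tab:ABCDEFG} lists precisely the admissible type transitions and nothing more; the one point requiring care is that the labels $1^0$ versus $1^x$ (types $A$ vs.\ $B$, and $G$ vs.\ $H$ in Definition~\ref{def:typeA-G}) record the entry sitting above the new $1$, which belongs to the previous bottom row and is therefore already fixed by that row's type, so the type of the appended row is well defined. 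This is routine but is the bulk of the work, and I regard organizing its eight cases (together with the admissibility of a $2$, which needs a $0$ then a $1$ immediately above) as the main combinatorial obstacle.

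Next I would encode Table~\ref{tab:ABCDEFG} as the transition matrix whose $(t',t)$ entry is $1$ exactly when $t'\in f(t)$. Since no rule produces type $H$, the row indexed by $H$ vanishes and $H$ never occurs along the orbit started from $A_1$, so I pass to the $7\times 7$ submatrix $M$ on the types $A,\dots,G$; I record the single array of $A_1=\{(0,1)\}$ by the seed vector $v_0$ having a $1$ in the $B$ coordinate (its lone $1$ has nothing above it, so, like type $B$, it extends only to $(1,0)$). By the first step the vector of type-counts of $A_n$ is $M^{n-1}v_0$, whence
\[
d_n=(1,\dots,1)\,M^{n-1}v_0 ,
\]
and the initial values $d_1=1,\ d_2=1,\ d_3=2$ are read off from $A_1$, $A_2$ and Figure~\ref{fig:tree1}.

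The genuinely new algebraic point is that, unlike in Theorems~\ref{thm:1} and~\ref{thm:recforc_n}, the characteristic polynomial of $M$ is \emph{not} a power of $x$ times $x^3-x^2-2$: it factors as $x\,(x^3-x^2-2)(x^3+x^2-1)$, with an extra irreducible cubic $x^3+x^2-1$ appearing, so Cayley--Hamilton by itself would deliver only a weaker order-six recurrence rather than the stated one. I would therefore use the kernel device of Theorem~\ref{thm:2}, but applied on the left: a short computation of the row vectors $r_k=(1,\dots,1)M^k$ gives $r_3=r_2+2r_0$, i.e.\ the counting covector lies in the left kernel,
\[
(1,\dots,1)\,(M^3-M^2-2I)=0 .
\]
Consequently, for $n\ge 4$,
\[
d_n-d_{n-1}-2d_{n-3}=(1,\dots,1)(M^3-M^2-2I)M^{n-4}v_0=0 ,
\]
which is exactly $d_n=d_{n-1}+2d_{n-3}$. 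Together with the three initial values this proves the theorem, and since these data coincide with those of Theorem~\ref{thm:fortheb_n} it also re-identifies $d_n$ with \cite[A203175]{oeis}. The step I expect to be delicate is precisely recognising that it is this left-kernel relation, and not the characteristic polynomial, that isolates the order-three recurrence.
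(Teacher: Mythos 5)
Your proposal is correct, and its combinatorial half --- Table~\ref{tab:ABCDEFG}, discarding type $H$, the $7\times7$ transition matrix (the paper's $N$, your $M$) with seed vector at $B$, and the formula $d_n=(1,\dots,1)N^{n-1}(0,1,0,0,0,0,0)^T$ --- is identical to the paper's. Where you genuinely diverge is in how the order-three recurrence is extracted from a matrix whose characteristic polynomial $x(x^3-x^2-2)(x^3+x^2-1)$ carries the unwanted factor $x^3+x^2-1$. The paper resolves this by passing to a quotient: it computes the kernel of $N(N^3+N^2-I)$, reads off the partition $X=\{A,D\}$, $Y=\{E,G\}$, $Z=\{B,C,F\}$, forms the $3\times3$ matrix $P$ with characteristic polynomial exactly $x^3-x^2-2$, and justifies $d_n=(1,1,1)P^{n-1}(0,0,1)^T$ through the commutative diagram (\ref{eqn:cmmuttive}) built from the maps $U,V,Q,R,S,T$. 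You instead verify a left-kernel identity, $(1,\dots,1)(N^3-N^2-2I)=0$, and this checks out: $(1,\dots,1)N=(2,1,1,2,2,1,2)$, $(1,\dots,1)N^2=(4,2,2,4,2,2,2)$, and $(1,\dots,1)N^3=(6,4,4,6,4,4,4)$, which is indeed $(1,\dots,1)N^2+2(1,\dots,1)$; hence $d_n-d_{n-1}-2d_{n-3}=(1,\dots,1)(N^3-N^2-2I)N^{n-4}v_0=0$ for $n\ge4$, as you claim. Your route is shorter, is the exact mirror of the paper's own Theorem~\ref{thm:2} (there the relevant vector lies in the right kernel of $M^3-M^2-2I$; here the counting covector lies in the left kernel), and is in fact implied by the paper's construction, since $T=SV$ and $VN=PV$ give $T(N^3-N^2-2I)=S(P^3-P^2-2I)V=0$. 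What the paper's heavier machinery buys is structure rather than brevity: the partition $X,Y,Z$ is a combinatorially meaningful coarse-graining of the row types (Tables~\ref{tab:XYZ} and~\ref{tab:XYZtoAB}), which the conclusions section exploits when discussing orderings and bijections between $A_n$, $S_n$, and the boundary edges of the dragon curve; your identity certifies the recurrence with minimal computation but yields no such interpretation.
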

  \begin{proof}
    We have discussed above how rows $A, B, C, D, E, F, G, H$
    of an element of $A_n$ transition to
    the next possible row of an element of $A_{n+1}$, as shown in
    Table~\ref{tab:ABCDEFG}.
    Since row type $H$ never occurs in the image of
    $f$, we will leave this out from now on.
    The transition function $f$ in Table~\ref{tab:ABCDEFG} can be represented by the matrix
    \begin{equation}
     N= \begin{pmatrix}
        0 & 0 & 0 & 1 & 0 & 1 & 0 \\ 
        0 & 0 & 0 & 0 & 1 & 0 & 1 \\
        0 & 0 & 0 & 0 & 0 & 0 & 1 \\
        1 & 1 & 1 & 0 & 0 & 0 & 0 \\
        1 & 0 & 0 & 0 & 0 & 0 & 0 \\
        0 & 0 & 0 & 0 & 1 & 0 & 0 \\
        0 & 0 & 0 & 1 & 0 & 0 & 0 \\
        \end{pmatrix}
\end{equation}
    with rows and columns corresponding to $A$ to $G$ in alphabetical order.
    Since
     $(0,1,0,0,0,0,0)$  corresponds to $B$, in the
initial set $A_1=\{B\}$.
We have that the number of elements of each type in $A_n$ is
given by the corresponding component of the vector $N^{n-1}(0,1,0,0,0,0,0)^T$,
and the count of all of these is
\begin{equation}
  \label{eqn:firstd_n}
  d_n=(1,1,1,1,1,1,1)N^{n-1}(0,1,0,0,0,0,0)^T.
  \end{equation}
The matrix $N$ has characteristic polynomial
$x(x^3 - x^2 - 2)(x^3 + x^2 - 1)$.
The factor $(x^3 - x^2 - 2)$ corresponds to our expected recurrence relation.
However, $(0,1,0,0,0,0,0,0)$,
does not belong to the kernel of this matrix, so we can't immediately
conclude our proof.

The kernel of $N(N^3+N^2-I)$
is spanned by
$(1, 0, 0, -1, 0, 0, 0)$,
      $(0, 0, 0, 0, 1, 0, -1)$,
$(0, 1, -1, 0, 0, 0, 0)$,
      $(0, 1, 0, 0, 0, -1, 0)$,
which corresponds to a partition of
$\{A,B,\dots,G\}$ into the sets
$$ X=\{A,D\}, Y=\{E,G\}, Z=\{B,C,F\}.$$
We can rewrite Table~\ref{tab:ABCDEFG} in terms of $X, Y, Z$:
\begin{table}[H]
$$
  \begin{array}{l|l}
\text{    type of  } m_n & \text{possible type of } m_{n+1}\\  
\hline
X & X,Y\\
Y & Z,Z\\
Z & X
  \end{array}
  $$
     \caption{Rules for elements of $A_{n+1}$ following from elements of
       $A_n$, in terms of types $X, Y, Z$.}
       \label{tab:XYZ}
  \end{table}
This table shows how each type of matrix in $A_n$ can be extended to
a matrix of some type
in $A_{n+1}$, in terms of the types $X, Y, Z$.
For example, the initial element $(0,1)\in A_1$ has type $Z$, and can only be
followed by an element in $A_2$ of type $X$, which can be followed in
$A_3$ by elements of type $X$ and $Y$.
For example, to obtain the elements of $A_5$, we have sequences
corresponding to elements of $A_5$ as in Table~\ref{tab:XYZtoAB}.
\begin{table}
$$
\begin{array}{c|c|c|c|c|l|l}
  \multicolumn{5}{c|}{\text{applications of }  f} &
  \text{resulting sequence} &\text{sequence in terms of $A$ to $G$.}\\
\hline
   &  &  & X & X & ZXXXX& BDADA\\
    \cline{5-7}
    &   & X  &   & Y & ZXXXY &BDADG\\
    \cline{4-7}
    Z    &X   &   & Y & Z &ZXXYZ &BDAEB\\
        \cline{5-7}
        &   &   &  & Z &ZXXYZ &BDAEF\\
  \cline{3-7} 
  &   & Y & Z & X & ZXYZX &BDGBD\\
    \cline{4-7}
  &   &   & Z & X& ZXYZX &BDGCD\\
  \end{array}
$$
\caption{Example of words in $X, Y, Z$, and the corresponding
words in $A,B,C,D,E,F,G$.}
\label{tab:XYZtoAB}
\end{table}
We can rewrite Table~\ref{tab:XYZ} as a function
\begin{equation}
  \label{eqn:def_f}
  f(X)=\{X,Y\},\> f(Y)=\{Z,Z\}, \> f(Z)=\{X\},
  \end{equation}
where the images are not sets, but ordered lists of elements of the
set $\{X,Y,Z\}$.
The map $f$ (\ref{eqn:def_f}) can now be written in matrix format as
$$
P=\begin{pmatrix}
  1 & 0 & 1\\
  1 & 0 & 0\\
  0 & 2 & 0
\end{pmatrix},
$$
where the first, second and third
rows and columns correspond to the sets $X, Y, Z$ respectively.
Since $P$ tells us how we can continue sequences of the rows of
elements of $A_n$ to $A_{n+1}$, and the starting element 
$B$ is contained in $Z$,
which corresponds to the vector $(0,0,1)$,
heuristically, we have that
\begin{equation}
  \label{eqn:d_n_short}
  d_n=(1,1,1)P^{n-1}(0,0,1)^T.
\end{equation}
To prove more formally that (\ref{eqn:d_n_short}) holds,
we view $\mathbb R^3$ as a quotient of
$\mathbb R^7$ by the kernel of $N(N^3+N^2-I)$.  Corresponding to this
description, we find a quotient map,
$V:\mathbb R^7\rightarrow \mathbb R^3$, 
and a right inverse inclusion map $U:\mathbb R^3\hookrightarrow\mathbb R^7$.
For simplicity of notation, we  denote
the corresponding matrices by the same symbols.
The maps $U$ and $V$ are given by the matrices
\begin{equation}
  U=
  \begin{pmatrix}
    1 & 0 & 0\\
    0& 1 & 0\\
    0 & 0 & 0\\
    0 & 0 & 0\\
    0 & 0 & 1\\
    0 & 0 & 0\\
        0 & 0 & 0\\    
  \end{pmatrix},\rule{1cm}{0cm}
  V=\begin{pmatrix}
  1 & 0 & 0 & 1 & 0 & 0 & 0\\
  0 & 0 & 0 & 0 & 1 & 0 & 1\\
  0 & 1 & 1 & 0 & 0 & 1 & 0
  \end{pmatrix}.
\end{equation}
We have $P=VNU$, and can also verify that $VU=I_3$.
(the identity in $\text{GL}(n)$ is denoted by $I$ for all $n$, or $I_n$ for
clarity.)
  Define linear maps
  $Q:\mathbb R\rightarrow \mathbb R^3$,
  $R:\mathbb R\rightarrow \mathbb R^7$,
  $S:\mathbb R^3\rightarrow \mathbb R$
  and
  $T:\mathbb R^7\rightarrow \mathbb R$
  by
  \begin{equation}
    R=\begin{pmatrix}
    0\\1\\0\\0\\0\\0\\0\\0\\
    \end{pmatrix},\rule{1cm}{0cm}
        Q=\begin{pmatrix}
    0\\0\\1
        \end{pmatrix},\rule{1cm}{0cm}
    T= (1,1,1,1,1,1,1),
    \rule{1cm}{0cm}
    S=(1,1,1).
  \end{equation}
  Then the RHS of (\ref{eqn:firstd_n}) is the value of
   $TN^{n-1}R(1)$, and the RHS of
  (\ref{eqn:d_n_short}) is the value of $SP^{n-1}Q(1)$.
  So to prove that (\ref{eqn:d_n_short}) holds,
  we must check that $TN^{n-1}R=SP^{n-1}Q$ for all $n\ge 1$.
  This is equivalent to showing that the following diagram
  commutes, that is, the top row is the same map as the bottom row:
  \begin{equation}
    \label{eqn:cmmuttive}
\begin{tikzcd}[column sep=large,row sep=large]
  {\mathbb R}
   \arrow[r,"R"]
  \arrow[d,equal] & 
  {\mathbb R^7}
  \arrow[r,"N"]
  \arrow[d,swap,"V"] & 
{\mathbb R^7}
  \arrow[r,"N"]
  \arrow[d,swap,"V"]  & 
{\mathbb R^7}
\arrow[d,swap,"V"]
\arrow[r,dotted,"N^{n-4}"]
& 
{\mathbb R^7}
  \arrow[r,"N"]
  \arrow[d,swap,"V"] & 
{\mathbb R^7}
\arrow[d,swap,"V"]
\arrow[r,"T"] &
{\mathbb R}\arrow[d,equal]
  \\
   {\mathbb R}
   \arrow[r,swap,"Q"]
   &
{\mathbb R^3}
  \arrow[r,swap,"P"] & 
{\mathbb R^3}
  \arrow[r,swap,"P"] & 
        {\mathbb R^3}
        \arrow[r,swap,dotted,"P^{n-4}"]
& 
{\mathbb R^3}
  \arrow[r,swap,"P"] & 
        {\mathbb R^3}
        \arrow[r,swap,"S"]
        &
        {\mathbb R}
\end{tikzcd}
    \end{equation}
To see this, first note that we have equalities
  \begin{equation}
    Q=VR,\rule{1cm}{0cm}
    T=SV,\rule{1cm}{0cm}
    PV = VN,
    \end{equation}
  which can be checked computationally.  Then by an inductive
  argument,
  we have for $i=0,\dots,n-1$ that
  $TN^{n-1}R=SVN^{n-1}R = SP^iVN^{n-1-i}R = SP^{n-1}VR = SP^{n-1}Q$,
which just corresponds to following through the diagram,
and we see that (\ref{eqn:cmmuttive}) does commute.  So
  (\ref{eqn:d_n_short}) does indeed hold.  
  We find that the matrix $P$ has characteristic polynomial $x^3-x^2-2$,
  so $P^3-P^2-2I=0$.
  Thus  from (\ref{eqn:d_n_short}), using the same method
  as in (\ref{eqn:exampleofrecrel}), we obtain the recurrence 
$$d_n-d_{n-1}-2d_{n-3}=0, \text{ for }n\ge 3$$
with initial terms $d_1=d_2=1$ and $d_3=2$ as in Figure~\ref{fig:tree1}.
    \end{proof}

\begin{corollary}
  We have $a_n=b_{n-1}=c_n=d_{n-1}$,
  with $a_n, b_n, c_n, d_n$ as defined in
  (\ref{eqn:def:a_n}),
  (\ref{eqn:pre_taylor}),
  Definition~\ref{def:S_n},
and    (\ref{eqn:def:d_n}) 
  respectively.
\end{corollary}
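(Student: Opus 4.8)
The plan is to exploit the fact that the four preceding theorems have already done the substantive work. Theorem~\ref{thm:2}, Theorem~\ref{thm:fortheb_n}, Theorem~\ref{thm:recforc_n} and Theorem~\ref{thm:forthed_n} each show that the sequence in question satisfies one and the same linear recurrence, namely $x_n = x_{n-1} + 2x_{n-3}$, whose characteristic polynomial is $x^3 - x^2 - 2$. Given this common recurrence, proving the corollary reduces to a finite comparison of initial terms, because a solution of a third-order linear recurrence is completely determined by any three consecutive values.

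Concretely, I would first note that for $n$ large enough each of $a_n,b_n,c_n,d_n$ obeys $x_n = x_{n-1}+2x_{n-3}$. I would then read off the initial data recorded in the four theorems: $a_0=1,\,a_1=1,\,a_2=2$ (Theorem~\ref{thm:2}); $b_1=1,\,b_2=1,\,b_3=2$ (Theorem~\ref{thm:fortheb_n}); $c_1=1,\,c_2=2,\,c_3=4$ (Theorem~\ref{thm:recforc_n}); and $d_1=1,\,d_2=1,\,d_3=2$ (Theorem~\ref{thm:forthed_n}). Propagating each of these by the recurrence reproduces the same stream of values $1,1,2,4,6,10,18,\dots$ (read with the appropriate starting index), and comparing the lists fixes the index alignment asserted in the statement: $a_n$ and $c_n$ run in phase, while $b_n$ and $d_n$ are offset by one step relative to them. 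Having matched three consecutive terms in each pairing, uniqueness of solutions of the recurrence forces the paired sequences to coincide at every larger index, and a short direct check settles the finitely many small indices.

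The only genuine obstacle is bookkeeping the index offsets, and this is exactly where I would be most careful. Each theorem asserts its recurrence only from some explicit point onward (typically $n\ge 4$), so I must ensure that the three matched base values all lie in the range governed by the recurrence for both members of a pair, and that the resulting offset is precisely the one written in the statement rather than off by one in the wrong direction. Once the offsets are pinned down and three consecutive terms are verified to agree in each of the pairs $(a,c)$, $(b,d)$ and $(a,b)$, a single induction on $n$ using $x_n=x_{n-1}+2x_{n-3}$ chains all four equalities together. No further generating-function or combinatorial input is needed beyond what the four theorems already supply.
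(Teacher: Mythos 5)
Your proposal is correct and follows essentially the same route as the paper's own proof: both observe that Theorems~\ref{thm:2}, \ref{thm:fortheb_n}, \ref{thm:recforc_n} and \ref{thm:forthed_n} establish the common recurrence $x_n=x_{n-1}+2x_{n-3}$, and then identify the sequences by matching the initial terms $1,1,2,4,6,\dots$ up to an index shift, exactly as you do. Your insistence on pinning down the direction of the offsets is warranted and is in fact slightly more careful than the paper: propagating the initial data as you describe gives $a_n=c_n$ and $a_n=b_{n+1}=d_{n+1}$, so the shift actually runs in the opposite direction to the one written in the corollary's statement ($a_n=b_{n-1}=d_{n-1}$), a discrepancy your bookkeeping step would catch.
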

\begin{proof}
  The first few terms, $1,1,2,4,6$, of these sequences are
  given in (\ref{eqn:firstfew+a_n}),
  (\ref{eqn:recforTaylorb_n}),
Table~\ref{tab:examplebinseq}, and
Figure~\ref{fig:tree1}
  respectively.  The only difference is that
  $a_n$ starts from $n=0$,
  $b_n$ starts from $n=1$,
  $c_n$ starts from $n=0$, (inserting an extra $c_0=1$ term)
  and
  $d_n$ starts from $n=1$.
  We showed that
  they all satisfy the same recurrence relation, in
  Theorems~\ref{thm:2},
  \ref{thm:fortheb_n},
  \ref{thm:recforc_n},
  and
  \ref{thm:forthed_n} respectively.
Thus the result follows.  
  \end{proof}

\section{Conclusions}

By using the L-system for the boundary of the Harter-Heighway dragon curve,
we have been able to prove results not only about the dragon curve,
but also related sequences,
found in \cite{oeis},
by also viewing them in terms of transition matrices inspired by
L-systems.
Given that the sequences in Section~\ref{sec:rightside}
count sizes of various sets, which all turn out to have the same size,
we have actually constucted bijections between these sets.
The L-system for the Heighway dragon results in a word, with letters in 
an order which is lost by just using the matrix $M$, which only counts
the total number of letters.  The elements of the sets in
$S_n$ and $A_n$ in
Sections~\ref{subsec:S_n}
and \ref{subsec:A_n} do not a proiri have a natural ordering, but we can use the
L-system for the right side of the dragon curve to impose an ordering on the elements of the sets $S_n$ and $A_n$ (Definitions~\ref{def:A_n} and \ref{def:S_n}), though we must make a choice, e.g.,
lexicographical on the elements $A,\dots,G$ in Definition~\ref{def:typeA-G}.
This gives orderings for example as in Table~\ref{tab:examplebinseq} and
Figure~\ref{fig:tree1}.
Given such a choice, this results in a corresponding bijection between the
elements of the sets $A_n$ and $S_n$ and the edges of the right side of the
dragon curve $\mathcal S_n$.
This may well just be numerology, but perhaps there is an
 interesing geometrical or
number theoretical meaning waiting to be discovered.

\bibliographystyle{plain}

\end{document}